\newtheorem{theorem}{Theorem}[section]
\newtheorem{lemma}[theorem]{Lemma}
\theoremstyle{definition}
\newtheorem{definition}[theorem]{Definition}
\newtheorem*{definition*}{Definition}
\theoremstyle{remark}
\newtheorem{remark}[theorem]{Remark}
\numberwithin{equation}{section}
\DeclareMathAlphabet{\mathsl}{OT1}{cmss}{m}{sl}
\SetMathAlphabet{\mathsl}{bold}{OT1}{cmss}{bx}{sl}
\newcommand{\cA}{\ensuremath{\mathcal A}}
\newcommand{\cB}{\ensuremath{\mathcal B}}
\newcommand{\cL}{\ensuremath{\mathcal L}}
\newcommand{\cN}{\ensuremath{\mathcal N}}
\newcommand{\cV}{\ensuremath{\mathcal V}}
\newcommand\cX{\ensuremath{\mathcal X}}
\newcommand{\cY}{\ensuremath{\mathcal Y}}
\newcommand{\cZ}{\ensuremath{\mathcal Z}}
\newcommand{\bbN}{\ensuremath{\mathbb N}} 
\newcommand{\bbP}{\ensuremath{\mathbb P}}
\newcommand{\bbZ}{\ensuremath{\mathbb Z}} 
\newcommand{\ldef}{\ensuremath{\mathrel{\mathop:}=}}
\newcommand{\indicator}{\ensuremath{\mathbbm{1}}}
\begin{document}

\title[Contact Processes with switching]{The Contact Processes with switching}


\author{Jochen Blath}
\address{Goethe-Universität Frankfurt am Main}
\curraddr{Institut für Mathematik, Robert-Mayer Strasse 10, 60629 Frankfurt}
\email{blath@math.uni-frankfurt.de}
\thanks{}


\author{Felix Hermann}
\address{Goethe-Universität Frankfurt am Main}
\curraddr{Institut für Mathematik, Robert-Mayer Strasse 10, 60629 Frankfurt}
\email{hermann@math.uni-frankfurt.de}
\thanks{}

\author{Michel Reitmeier}
\address{}
\curraddr{}
\email{}
\thanks{}


\keywords{Contact Process, coupling, dormancy, random environment, switching}

\date{\today}


\begin{abstract}
In this paper, we introduce a type switching mechanism for the Contact Process on the lattice $\mathbb{Z}^d$. That is, we allow the individual particles/sites to switch between two (or more) types independently of one another, and the different types may exhibit specific infection and recovery dynamics. 
 Such type switches can eg.\ be motivated from biology, where `phenotypic switching' is common among micro-organisms.  Our framework includes as special cases systems with switches between `active' and `dormant' states (the Contact Process with dormancy, CPD), and the Contact Process in a randomly evolving environment (CPREE) introduced by Broman (2007). The  `standard' multi-type Contact Process (without type-switching) can also be recovered as a limiting case.

After constructing the process from a graphical representation, we establish several basic properties that are mostly analogous to the classical Contact Process.  We then  provide couplings between several variants of the system, which in particular yield the existence of a phase transition.
Further, we investigate the effect of the switching parameters on the critical value of the system by providing rigorous bounds obtained from the coupling arguments as well as numerical and heuristic results.  Finally,  we investigate scaling limits for the process as the switching parameters tend to 0 (slow switching regime)  resp.\ $\infty$  (fast switching regime). We conclude with a brief discussion of further model variants and questions for future research. 
\end{abstract}

\maketitle

\section{The Contact Process with Switching}
\label{sec:intro}

\subsection{Model description}

We introduce the Contact Process with switching (CPS) as a continuous-time Markov process on the grid $S\ldef\mathbb{Z}^d$ as follows. 
At each time $t \ge 0$, the state  of the process is a function $\xi_t : \mathbb{Z}^d \to F$, where 
\begin{gather*}
F\ldef\{(0,a),(0,d),(1,a),(1,d)\}
\end{gather*} 
describes the possible states of the individuals at each grid point. (We call this the \emph{map-based representation}.)
The first component of an element from $F$ indicates whether the corresponding site (or ``particle'') is infected (`1') or healthy (`0'), and the second component refers to the type of the particle(say, `$a$' or `$d$'). The dynamics is  as follows:
Independently of all other particles an infected particle of type $a$ (resp.\ $d$) recovers with rate $\delta_a$ (resp.\ $\delta_d$), and any infected particle of type $\tau_1\in\{a,d\}$ infects a healthy neighbour of type $\tau_2\in\{a,d\}$ at rate $\lambda_{\tau_1\tau_2}\geq0$.
Finally, healthy type $a$ particles switch into type $d$ at rate $\sigma_0$, and from $d$ to $a$ at rate $\rho_0$
(with similar rates $\sigma_1$ and $\rho_1$ for the infected individuals). See Figure \ref{figure:CPS} for a visualization of these transition rates at a given site $x$, where we define 
\begin{gather*}
n_{1,a}(x,\xi)\ldef |\{z\in\mathcal{N}:\xi(x+z)=(1,a)\}|
\end{gather*}
as the number of infected type $a$ individuals in the (finite) neighbourhood of $x$ given by $\mathcal{N} \subset \mathbb{Z}^d$, and correspondingly,
\begin{gather*}
n_{1,d}(x,\xi)\ldef |\{z\in\mathcal{N}:\xi(x+z)=(1,d)\}|
\end{gather*}
for the number of infected  neighbours of $x$ of type  $d$.

For simplicity, we restrict ourselves here to finite neighbourhoods $\mathcal{N}$, so that the CPS $\{\xi_t\}$ can readily be constructed e.g.\ from a graphical representation (see Section \ref{ssn:graphical_construction} for details).  Further, we  assume $\sigma_0=\sigma_1=\sigma$ and $\rho_0=\rho_1=\rho$ unless stated otherwise.

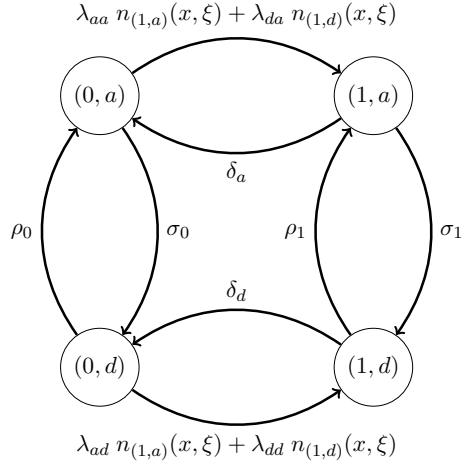
\begin{figure}[h]
	\centering
	\scalebox{.8}{
		\begin{tikzpicture}
		\node (A) at (0,0) [circle,draw] {$(0,a)$};
		\node (B) at (4.5,0) [circle,draw]{$(1,a)$};
		\node (C) at (0,-4.5) [circle,draw]{$(0,d)$};
		\node (D) at (4.5,-4.5) [circle,draw]{$(1,d)$};
		
		\draw[->, very thick] (A) to[bend left=35]node[above] {$\lambda_{aa}~n_{(1,a)}(x,\xi)+\lambda_{da}~n_{(1,d)}(x,\xi)$} (B);
		\draw[->, very thick] (C) to[bend right=35]node[below] {$\lambda_{ad}~n_{(1,a)}(x,\xi)+\lambda_{dd}~n_{(1,d)}(x,\xi)$} (D);
		\draw[<-, very thick] (A) to[bend right=35]node[below] {$\delta_a$} (B);
		\draw[<-, very thick] (C) to[bend left=35]node[above] {$\delta_d$} (D);
		\draw[->, very thick] (A) to[bend left=35]node[right] {$\sigma_0$} (C);
		\draw[<-, very thick] (A) to[bend right=35]node[left] {$\rho_0$} (C);
		\draw[->, very thick] (B) to[bend left=35]node[right] {$\sigma_1$} (D);
		\draw[<-, very thick] (B) to[bend right=35]node[left] {$\rho_1$} (D);

		\end{tikzpicture}}
	\caption{Flip rates of the Contact Process with switching (CPS).}
	\label{figure:CPS}
\end{figure}

The above type switches can be motivated for example by {\em phenotypic switches}
fostering persistence in bacterial communities, cf.\ Balaban et al.\ \cite{Balaban2004}.  The increased phenotypic diversity obtained from different switching mechanisms can often be understood as bet-hedging strategy in fluctuating environments  and has recently been investigated eg.\ in branching process based models \cite{DMB11, BHS21}.
The special case of  {\em Microbial dormancy} \cite{LHWB21} may be seen as drastic form of such a type switch,  where individuals enter inactive states with vanishing metabolic activity. This seems e.g.\ to be an efficient strategy to curb virus epidemics,  cf.\ e.g.\ \cite{JF19, BT21}, and  this  partially motivated the present study.

\subsection{Special cases}
Obviously,  a {\em first special case} of the CPS model is given by the classical Contact Process, denoted by CP,  with parameters $\lambda$ and $\delta$ chosen as
$$
\lambda_{aa}=\lambda_{ad}=\lambda_{dd}=\lambda_{da}=  \lambda \quad \mbox{ and } \quad  \delta_a= \delta_d=\delta.
$$ 
A {\em second special case} can be obtained from choosing 
$$
\lambda_{aa}=\lambda,\quad \mbox{ and }\quad  \lambda_{ad}=\lambda_{da}= \lambda_{dd}= 0.
$$
This process, in which the type $d$ particles neither infect neighbours nor get infected, will be called the Contact Process with (host) dormancy (CPD).
This special case served as motivation for  the notation $a$ (for `active state') and $d$ (for `dormant state'). See Figure \ref{figure:CPD-CPREE} (left) for a visualization of the corresponding flip rates. 
 
 Intuitively, the incorporation of dormancy should lead to an increase of the critical infection rate of the process (compared to the classical Contact Process), and this is indeed the case as we will see below. However, the strength of the observed effect will depend on the particular model parameters, and here further natural parameter choices arise.
 For example, if one interprets (microbial) host dormancy as a state of vanishing metabolic activity, then this suggests that the recovery rate $\delta_d$ should be close to zero (the host does not recover from an infection while dormant). In contrast,  if one considers (human) dormancy from an epidemiological point of view as periods of reduced social contacts, then during such `dormant states', recovery should still be possible (while the ability to infect neighbours should be 0). 
Clearly, the second strategy seems more efficient for the hosts since it allows recovery during dormancy. This intuition will be confirmed later on (cf.\ Remark~\ref{remark:distancing-dormancy}).

 Naturally, one could also consider dormancy on the level of infectors instead of hosts. For example, when the activity of the infector can be linked to a specific state of the host, such as the reactivation of Herpes viruses which can be triggered by periods of stress of the host (cf.\ \cite{KME18}), then the CPS model could still be applicable. However, if the dormancy state of an infector switches independently of the state of the host, this will lead  to yet another model class outside the scope of the present paper. A possible corresponding `CPID' model will briefly be discussed in Section~\ref{sec:CPID}.

\medskip

A {\em third special case} of our model is given by the Contact Process in a randomly evolving environment (CPREE) investigated by Broman \cite{Broman2007}. Here, only the recovery rates depend on the type of the particle in question.
Denoting by $n_{(1)} (x, \xi)$ the number of infected individuals in the $\mathcal{N}$-neighbourhood of $x$ in $\xi$, its flip rates are given by Figure \ref{figure:CPD-CPREE} (right).

In this model, the states $a$ and $d$ can be interpreted as the two states of a random environment which evolves independently for each individual whose susceptibility for infection and ability for recovery is governed by the state of the environment.

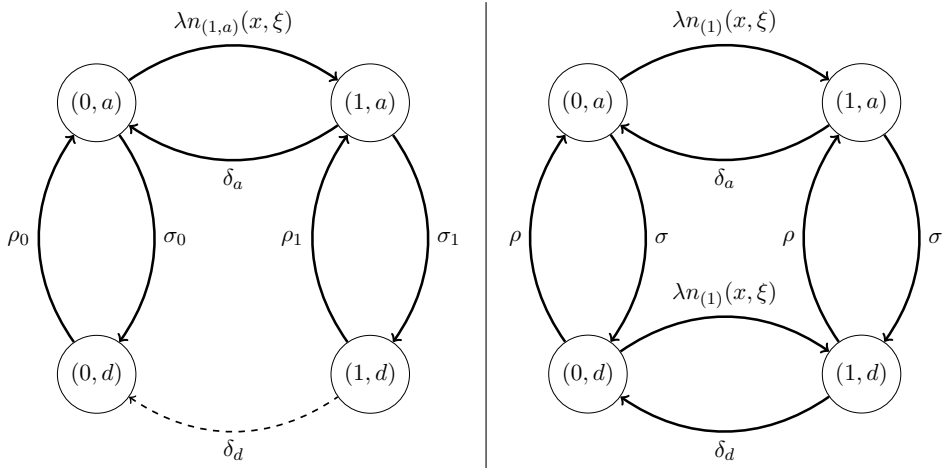
\begin{figure}[h]
	\centering
	\begin{minipage}{0.45\textwidth}
	  \scalebox{0.8}{
		\begin{tikzpicture}
		\node (A) at (0,0) [circle,draw] {$(0,a)$};
		\node (B) at (4.5,0) [circle,draw]{$(1,a)$};
		\node (C) at (0,-4.5) [circle,draw]{$(0,d)$};
		\node (D) at (4.5,-4.5) [circle,draw]{$(1,d)$};
		
		\draw[->, very thick] (A) to[bend left=35]node[above] {$\lambda n_{(1,a)}(x,\xi)$} (B);
		\draw[<-, very thick] (A) to[bend right=35]node[below] {$\delta_a$} (B);
		\draw[<-, thick, dashed] (C) to[bend right=35]node[below] {$\delta_d$} (D);
		\draw[->, very thick] (A) to[bend left=35]node[right] {$\sigma_0$} (C);
		\draw[<-, very thick] (A) to[bend right=35]node[left] {$\rho_0$} (C);
		\draw[->, very thick] (B) to[bend left=35]node[right] {$\sigma_1$} (D);
		\draw[<-, very thick] (B) to[bend right=35]node[left] {$\rho_1$} (D);

		\end{tikzpicture}}
	\end{minipage}
	\ \vrule\ 
	\begin{minipage}{.45\textwidth}
	  \scalebox{.8}{
		\begin{tikzpicture}
		\node (A) at (0,0) [circle,draw] {$(0,a)$};
		\node (B) at (4.5,0) [circle,draw]{$(1,a)$};
		\node (C) at (0,-4.5) [circle,draw]{$(0,d)$};
		\node (D) at (4.5,-4.5) [circle,draw]{$(1,d)$};
		
		\draw[->, very thick] (A) to[bend left=35]node[above] {$\lambda n_{(1)}(x,\xi)$} (B);
		\draw[->, very thick] (C) to[bend left=35]node[above] {$\lambda n_{(1)}(x,\xi)$} (D);
		\draw[<-, very thick] (A) to[bend right=35]node[below] {$\delta_a$} (B);
		\draw[<-, very thick] (C) to[bend right=35]node[below] {$\delta_d$} (D);
		\draw[->, very thick] (A) to[bend left=35]node[right] {$\sigma$} (C);
		\draw[<-, very thick] (A) to[bend right=35]node[left] {$\rho$} (C);
		\draw[->, very thick] (B) to[bend left=35]node[right] {$\sigma$} (D);
		\draw[<-, very thick] (B) to[bend right=35]node[left] {$\rho$} (D);
		
		\end{tikzpicture}}
	\end{minipage}
	\caption{Left: Flip rates of the Contact Process with dormancy (CPD). Here, $\delta_d$ can be either positive or zero.  Right: The Contact Process in a randomly evolving environment (CPREE).}
	\label{figure:CPD-CPREE}
\end{figure}

\section{Basic properties of the Contact Process with switching}

\subsection{Graphical construction and self-duality} 
\label{ssn:graphical_construction}

We introduce a graphical construction of the CPS, which is the key to establish several couplings within our model and the CP. Based on this construction, we also find that  the  CPS has a dual process within the same class of processes and even obtain an exact  \textit{self-duality} in the case $\lambda_{ad}=\lambda_{da}$.\\
The construction is a simple extension of the classical graphical representation  for the basic CP given by Harris \cite{Harris1978} and relies on families of independent Poisson processes driving infection and recovery events, combined with an idea from the construction of  the CPREE \cite{SW08} to determine the activity states of the particles.\\
Recalling the \emph{map}-based representation  $\{\xi_t\}$ form the introduction, note that here it will be convenient to work with the equivalent \emph{set}-based representation involving $$\cA_t:=\{x\in S\mid \xi_t(x)_2=a\},$$ the set of all active sites at time $t$, and $$\cX_t:=\{x\in S\mid \xi_t(x)_1=1\},$$ the set of all infected sites at time $t$,   where $\xi_t(x)_i$ denotes the $i$th component of $\xi_t(x)$.

To carry out the construction,   let the following families of independent Poisson Point Processes (PPPs) be defined on a common probability space $\big(\Omega,\mathcal{F},\mathbb{P}\big)$:
\begin{itemize}
	\item  The (potential) {\em recovery events}, given by PPPs $U^{a,x}=\{U_n^{a,x},n\geq1\}$, $U^{d,x}=\{U_n^{d,x},n\geq1\}$, 
	for every $x\in S$ with rates $\delta_a$ and $\delta_d$, respectively.	
	\item  The (potential) {\em type switching events}, given by the set of PPPs
	$V^{x,a\to d}$ and $V^{x,d\to a}$ for every $x\in S$ with rates $\sigma$ and $\rho$, respectively.
	\item  The potential {\em infection events}, given by PPPs $T^{x\to y,aa}$, $T^{x\to y,ad}$, $T^{x\to y,da}$ and $T^{x\to y,dd}$ for every pair of neighbours $(x,y)\in S^2$ with rates $\lambda_{aa}$, $\lambda_{ad}$, $\lambda_{da}$ and $\lambda_{dd}$, respectively. 
\end{itemize}
For convenience we denote these collections of PPPs by  $\mathcal{U}$, $\mathcal{V}$ and $\mathcal{T}$, respectively.

\smallskip

On the space time grid $S\times[0,\infty)$ we draw $\delta_a$'s, $\delta_d$'s, $\sigma$'s or $\rho$'s for every arrival time in $U^{a,x}$, $U^{d,x}$, $V^{x,a\to d}$ or $V^{x,d\to a}$, respectively.
A $\delta_a$ (or $\delta_d$) at $(x,t)$ indicates that the particle $x$ recovers from infection if it is infected and active (or dormant) at time $t$. The $\sigma$'s and $\rho$'s mark changes from active to dormant and vice versa. Moreover, we draw arrows of type $\tau\in\{aa,ad,da,dd\}$ from $(x,t)$ to $(y,t)$, whenever $t\in T^{x\to y,\tau}$. In particular, an arrow of type $aa$ from $(x,t)$ to $(y,t)$ signifies that $y$ gets infected by $x$ at time $t$ if both particles are active and $x$ is infected. The graphical construction is illustrated in Figure~\ref{figure:graphical-constr+dual}.

Now, we can construct the process of active particles, $(\cA_t)_{t\geq0}$, by letting
$\cV_{x,t}=(V^{x,d\to a}\cup V^{x,a\to d})\cap[0,t]$, the switching times of $x$ until time $t$, and writing
\begin{align*}
  \cA_t
   \ldef \Big\{x\in \bbZ^d \Big| \cV_{x,t}=\emptyset \wedge x\in\cA_0
           \ \text{ or }\ \max(\cV_{x,t}) \in V^{x,d\to a}\Big\},
\end{align*}
such that a particle is active, iff either it was active in the beginning and no switching occurred or the last switching time before $t$ is of type $d\to a$. Finally, this can be used to define the notion of \emph{infection paths} similarly to \cite[Definition 2.3]{SW08}. Note that we desist from their notion of \emph{active} paths to avoid confusion.

For $x,y\in S$ we say there \textit{exists an infection path from} $(x,0)$ \textit{to} $(y,t)$ if there exists a time sequence $s_0=0<s_1<s_2<\dots<s_{\ell+1}=t$ with $\ell\in\bbN$ and a sequence of particles $x_0=x,x_1,x_2,\dots,x_\ell=y$ such that:
	\begin{itemize}
		\item[(i)] For $i\in[\ell]$ there exists at time $s_i$ an arrow from $x_{i-1}$ to  $x_i$ and the activity states of  $x_{i-1}$ and $x_i$ at time $s_i$ \textit{match} with the type of the arrow, i.e.\ if both particles are for example active at time $s_i$ then the arrow has to be of type $aa$.
		\item[(ii)] For any $\delta_a$ at $(\tilde{x},t)\in\{x_i\}\times(s_i,s_{i+1})$ it holds $\tilde{x}\notin \mathcal{A}_{t}$.
		\item[(iii)] For any $\delta_d$ at $(\tilde{x},t)\in\{x_i\}\times(s_i,s_{i+1})$ it holds $\tilde{x}\in \mathcal{A}_{t}$.
	\end{itemize}
Consequently, for $t>0$ we define
\begin{gather*}
\cX_t\ldef\{y\in\bbZ^d: \text{for } x \in \cX_0 \text{ exists an infection path from } (x,0) \text{ to }(y,t)\}
\end{gather*}
the set of infected particles at time $t$. The pair $(\cX_t,\cA_t)_t$ now gives an explicit construction of the CPS via the \emph{set}-based representation.

In the same way as in \cite[(2.1) Theorem]{D95} we can find $t_0>0$ such that the interaction graph splits almost surely into clusters of finite size, making sure that the CPS is well-defined.

\begin{remark}
  \begin{enumerate}
    \item
      To specify initial configurations of infected particles $I\subset\bbZ^d$
      and of active particles $A\subset\bbZ^d$, i.e.\  choosing 
      $\cX_0=I$ and $\cA_0=A$, we  use the notation  $\cX_t^{I,A}$ and $\cA_t^{I,A}$ as well
      as $\xi_t^{I,A}$.
    \item
      Strictly speaking, the above defined process is left- but not right-continuous (cf. \cite[p.127]{D95}). Nevertheless, the left- and right-continuous versions are almost surely equal at any fixed time $t$ because the process has only countably many jumps. Hence, if we insist on càdlàg paths, we simply can redefine our process as the corresponding right-continuous version.
  \end{enumerate}
\end{remark}

From this graphical construction, by the usual thinning and coupling arguments, the following basic properties  can easily be established: 

\begin{theorem}
\label{thm:mon-add-att}
  Let $(\cX_t,\cA_t)_{t\geq0}$ and $(\tilde\cX_t,\cA_t)_{t\geq0}$ two CPS with coupled activity states.
  \begin{enumerate}
    \item If $\cX_0\subseteq\tilde\cX_0$ and the respective parameters satisfy $\tilde\lambda_\tau\geq\lambda_\tau\geq0$ for each $\tau\in\{aa,ad,da,dd\}$ and $0\leq\tilde\delta_\bullet\leq\delta_\bullet$ for every $\bullet\in\{a,d\}$, then there exists a coupling of $(\cX_t)$ and $(\tilde\cX_t)$ such that $$\cX_t\subseteq\tilde\cX_t, \qquad \mbox{for all $t \ge 0.$ \quad (monotonicity)}$$ 
    \item If $I_1,I_2\subseteq S$, then 
    $$\cX_t^{I_1,\cA_0}\cup\cX_t^{I_2,\cA_0}=\cX_t^{I_1\cup I_2,\cA_0}, \qquad \mbox{for all $t \ge 0.$ \quad (additivity)}
    $$
    \item If $I\subseteq J\subseteq S$, then 
     $$\cX_t^{I,\cA_0}\subseteq\cX_t^{J,\cA_0} \qquad \mbox{for all $t \ge 0.$ \quad (attractivity)}
     $$
  \end{enumerate}
\end{theorem}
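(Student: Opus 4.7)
The plan is to deduce all three statements from the graphical construction in Section \ref{ssn:graphical_construction}, using the infection-path characterization $\cX_t^{I,\cA_0}=\{y\in\bbZ^d : \text{there exist } x\in I \text{ and an infection path from } (x,0) \text{ to } (y,t)\}$. Because the activity process $(\cA_t)$ depends only on $\cA_0$ and on the family $\mathcal{V}$ of switching PPPs (and not on the infection/recovery PPPs), I would fix the same realizations of $\mathcal{V}$ and $\cA_0$ for both processes in each coupling, so that $(\cA_t)$ is literally shared. It then remains to specify the correct joint laws of the infection and recovery PPPs for each part.

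For monotonicity I would realize the two families of PPPs by thinning/superposition on a common probability space. For each $\tau\in\{aa,ad,da,dd\}$ and each ordered pair of neighbours $(x,y)$, write $\tilde T^{x\to y,\tau}=T^{x\to y,\tau}\cup T'^{x\to y,\tau}$ with $T'^{x\to y,\tau}$ an independent PPP of rate $\tilde\lambda_\tau-\lambda_\tau\geq 0$; this gives $T^{x\to y,\tau}\subseteq\tilde T^{x\to y,\tau}$ with the required marginals. Symmetrically, for each $\bullet\in\{a,d\}$ and $x\in S$, write $U^{\bullet,x}=\tilde U^{\bullet,x}\cup U'^{\bullet,x}$ with $U'^{\bullet,x}$ independent of rate $\delta_\bullet-\tilde\delta_\bullet\geq 0$, so $\tilde U^{\bullet,x}\subseteq U^{\bullet,x}$. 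Under this coupling any infection path witnessing $y\in\cX_t$ uses arrows from $T\subseteq\tilde T$ that match the shared activity states, and avoids all matched recovery marks in $U\supseteq\tilde U$; the very same space-time sequence therefore qualifies as an infection path in the $\tilde\cX$-process, giving $\cX_t\subseteq\tilde\cX_t$ pathwise.

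Additivity and attractivity then follow directly from the infection-path characterization on a single graphical environment: $y\in\cX_t^{I,\cA_0}$ iff some $x\in I$ admits an infection path to $(y,t)$, so $\cX_t^{I,\cA_0}=\bigcup_{x\in I}\cX_t^{\{x\},\cA_0}$. This identity immediately yields $\cX_t^{I_1\cup I_2,\cA_0}=\cX_t^{I_1,\cA_0}\cup\cX_t^{I_2,\cA_0}$ and $I\subseteq J\Rightarrow \cX_t^{I,\cA_0}\subseteq\cX_t^{J,\cA_0}$, with no further coupling required.

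The only delicate point I foresee is the matching condition in the definition of an infection path: an arrow of type $\tau_1\tau_2$ acts only when the activities of its endpoints fit, and a $\delta_\bullet$-mark is effective only while the corresponding site has activity $\bullet$. Because the activity process $(\cA_t)$ is shared between the two coupled CPS, the matching status of every arrow and every recovery mark is the same in both processes, so the verification reduces to the tautology that deleting (rather than adding) matched recovery marks cannot interrupt a transferred path, and enlarging (rather than shrinking) the arrow sets cannot invalidate it. Once this is spelled out, the three claims follow from purely set-theoretic inclusions on the driving PPPs.
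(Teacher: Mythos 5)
Your proposal is correct and is precisely the argument the paper intends: the paper omits the proof, stating only that the claims follow ``from this graphical construction, by the usual thinning and coupling arguments,'' and your write-up supplies exactly those details (superposition/thinning of the infection and recovery PPPs on a shared activity process, plus the infection-path characterization $\cX_t^{I,\cA_0}=\bigcup_{x\in I}\cX_t^{\{x\},\cA_0}$ for additivity and attractivity). Your observation that the shared realization of $(\cA_t)$ makes the matching status of every arrow and recovery mark identical in both processes is the key point that makes the set-theoretic inclusions go through.
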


As for the basic CP and the CPREE we obtain the dual process starting at a fixed time $t$ and letting the process run backwards in time, denoting $\hat\cX_s^t=\cX_{t-s}$ and $\hat{\cA}_s^t=\cA_{t-s}$ for $0\leq s\leq t$.  By reverting all arrows in the graphical construction of the forward-time version, one verifies that $(\hat\cX_s^t,\hat{\cA}_s^t)_s$ follows almost the same dynamics as $(\cX_s,\cA_s)_s$, except for the roles of $ad$-arrows and $da$-arrows being switched. Hence, it holds:

\begin{theorem}[Duality]\label{thm:duality}
  The dual process of a CPS $(\cX_t,\cA_t)_t$ is also distributed as a CPS with the same parameters as $(\cX_t,\cA_t)_t$ except for $\lambda_{ad}$ and $\lambda_{da}$ being swapped. Consequently, if it holds  that  $\lambda_{ad}=\lambda_{da}$,  then  $(\cX_t,\cA_t)$ is \emph{self-dual}.
\end{theorem}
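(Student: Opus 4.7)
The plan is to run the graphical construction from Section~\ref{ssn:graphical_construction} backwards on a fixed time window $[0,t]$ and to check that the resulting space-time picture is distributed as the graphical construction of a CPS with $\lambda_{ad}$ and $\lambda_{da}$ interchanged. Concretely, I would reflect every mark and every arrow lying at time $u$ to time $t-u$ and reverse the orientation of each infection arrow, defining $(\hat\cX_s^t,\hat\cA_s^t):=(\cX_{t-s},\cA_{t-s})$ in terms of the reflected picture. Since a Poisson point process on $[0,t]$ is invariant in law under the reflection $u\mapsto t-u$, each of the driving families $\mathcal{U}$, $\mathcal{V}$, $\mathcal{T}$ becomes, after reflection, a family of independent PPPs of the same rates, so the distributional input to the dual construction agrees with that of a forward CPS.

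Next I would track how the type labels behave. A type-$\tau_1\tau_2$ arrow $x\to y$ at time $u$ encodes that the source $x$ has activity $\tau_1$ and the target $y$ has activity $\tau_2$; after reversal the source and target are swapped, so the reflected arrow is of type $\tau_2\tau_1$. Hence $aa$- and $dd$-arrows retain their type, while $ad$- and $da$-arrows are interchanged, which on the level of rates is exactly the asserted swap $\lambda_{ad}\leftrightarrow\lambda_{da}$. For the activity component the switching marks in $V^{x,a\to d}$ and $V^{x,d\to a}$ flip the state in the opposite direction under reversal; however, since the single-site activity process is a two-state continuous-time Markov chain and such chains are reversible, the joint law of $(\cA_s)_{s\in[0,t]}$ coincides with the joint law of its time-reversal, so the activity component of the dual retains the same switching rates $\sigma$ and $\rho$ as the original process.

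Finally I would verify that the notion of infection path is compatible with the reversal: an infection path $(x,0)\leadsto(y,t)$ in the forward picture corresponds to an infection path $(y,0)\leadsto(x,t)$ in the reversed picture. This amounts to going through the three conditions (i)--(iii) of the definition; condition (i) transforms correctly by the type-label analysis above (using the $ad\leftrightarrow da$ swap to preserve the matching rule for type-$\tau_1\tau_2$ arrows), and the recovery conditions (ii)--(iii) only concern the activity state at a given mark, which is preserved under the reflection. Taken together this yields the first assertion; the case $\lambda_{ad}=\lambda_{da}$ makes the swap trivial and hence gives self-duality. I expect the main obstacle to be the careful bookkeeping for the activity process under time reversal, since the $\sigma$- and $\rho$-marks effectively switch roles in the dual and the correct rates only emerge by invoking the reversibility of the two-state chain; once this point is settled, the rest is a direct adaptation of Harris' original argument for the classical contact process.
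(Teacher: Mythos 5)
Your proposal is correct and follows essentially the same route as the paper, which likewise obtains the dual by reflecting the graphical construction in time and reversing the infection arrows, observing that this swaps the roles of $ad$- and $da$-arrows; you simply spell out the details (PPP invariance under $u\mapsto t-u$, the path conditions (i)--(iii), and the reversibility of the two-state activity chain) that the paper leaves implicit. The only point worth making explicit is that the time-reversal of the activity component has the same law only when $(\cA_s)$ is in its stationary regime $\pi_\alpha$ (or when one conditions on the realized activity path, as in the coupled duality relation), a caveat the paper itself glosses over.
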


 In particular, all special cases mentioned in the introduction are self-dual. The construction of the dual process is also illustrated  in Figure~\ref{figure:graphical-constr+dual}. Further, from the picture it follows immediately that the so called \textit{duality relation} holds, namely
\begin{gather*}
  \{\cX_t^{I,A} \cap J \neq \emptyset\}
    = \{\cX_s^{I,A} \cap \hat\cX_s^{t,J,\cA_t} \neq \emptyset\}
    = \{I \cap \hat\cX_t^{t,J,\cA_t}\neq \emptyset\}
    \quad\text{for all } 0\leq s\leq t.
\end{gather*}
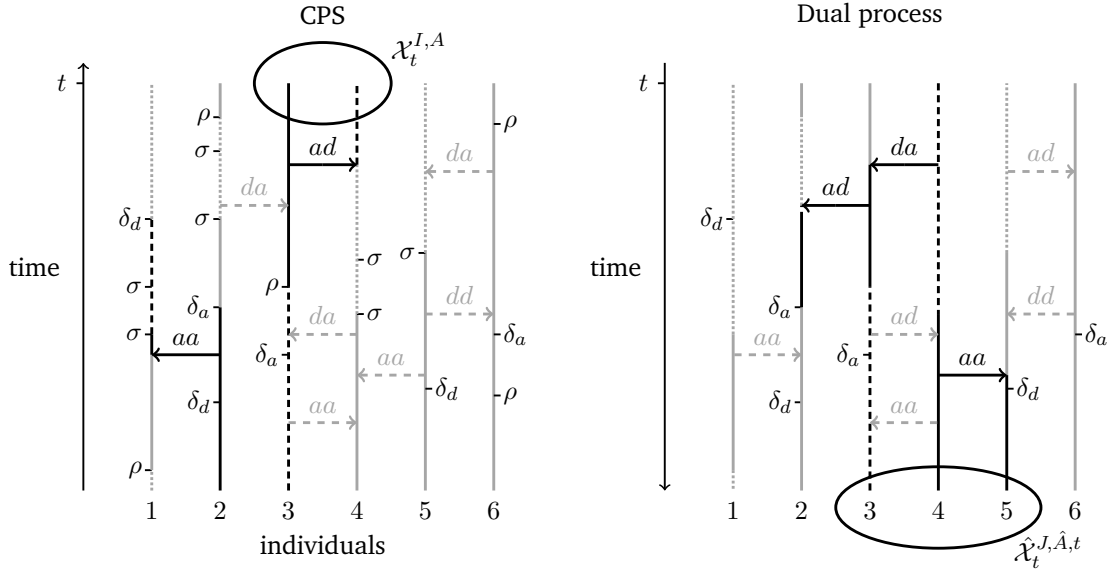
\begin{figure}[h]
	\hspace*{-1cm}%
	\scalebox{0.9}{
		\begin{tikzpicture}[darkstyle/.style={circle,inner sep=0pt}]
		\node () at (3.5,7){CPS};
		
		\draw[->,thick](0,0)--(0,6.3)node[left,xshift=-0.2cm,yshift=-3cm ] {time};
		\draw[-,thick](-0.1,6)--(0,6)node[left,xshift=-0.1cm]{$t$};
		\node () at (1,-0.3){$1$};
		\node () at (2,-0.3){$2$};
		\node () at (3,-0.3){$3$};
		\node () at (4,-0.3){$4$};
		\node () at (5,-0.3){$5$};
		\node () at (6,-0.3){$6$};
		\node () at (3.5,-0.8){individuals};
		
		\draw[-,very thick,color=gray!70!white,densely dotted] (1,0) to (1,0.3);
		\draw[-,thick](0.9,0.3)--(1,0.3)node[left]{$\rho$};
		\draw[-,very thick,color=gray!70!white] (1,0.3) to (1,2);
		\draw[-,very thick,color=black] (1,2) to (1,2.3);
		\draw[-,thick](0.9,2.3)--(1,2.3)node[left]{$\sigma$};
		\draw[-,thick](0.9,3)--(1,3)node[left]{$\sigma$};
		\draw[-,thick](0.9,4)--(1,4)node[left]{$\delta_d$};
		\draw[-,very thick,color=gray!70!white,densely dotted] (1,4) to (1,6);
		\draw[-,very thick,color=black,densely dashed] (1,2.3) to (1,4);
		\draw[-,thick](1.9,1.3)--(2,1.3)node[left]{$\delta_d$};
		\draw[-,very thick,color=black] (2,0) to (2,2.7);
		\draw[-,thick](1.9,2.7)--(2,2.7)node[left]{$\delta_a$};
		\draw[-,very thick,color=gray!70!white] (2,2.7) to (2,4);
		\draw[-,thick](1.9,4)--(2,4)node[left]{$\sigma$};
		\draw[-,thick](1.9,5)--(2,5)node[left]{$\sigma$};
		\draw[-,very thick,color=gray!70!white,densely dotted] (2,4) to (2,5.5);
		\draw[-,thick](1.9,5.5)--(2,5.5)node[left]{$\rho$};
		\draw[-,very thick,color=gray!70!white] (2,5.5) to (2,6);
		\draw[-,very thick,color=black,densely dashed] (3,0) to (3,3);
		\draw[-,thick](2.9,2)--(3,2)node[left]{$\delta_a$};
		\draw[-,thick](2.9,3)--(3,3)node[left]{$\rho$};
		\draw[-,very thick,color=black] (3,3) to (3,6);
		\draw[-,very thick,color=gray!70!white] (4,0) to (4,2.6);
		\draw[-,thick](4.1,2.6)--(4,2.6)node[right]{$\sigma$};
		\draw[-,thick](4.1,3.4)--(4,3.4)node[right]{$\sigma$};
		\draw[-,very thick,color=gray!70!white,densely dotted] (4,2.6) to (4,4.8);
		\draw[-,very thick,color=black,densely dashed] (4,4.8) to (4,6);
		\draw[-,very thick,color=gray!70!white] (5,0) to (5,3.5);
		\draw[-,thick](5.1,1.5)--(5,1.5)node[right]{$\delta_d$};
		\draw[-,thick](4.9,3.5)--(5,3.5)node[left]{$\sigma$};
		\draw[-,very thick,color=gray!70!white,densely dotted] (5,3.5) to (5,6);
		\draw[-,very thick,color=gray!70!white] (6,0) to (6,6);
		\draw[-,thick](6.1,1.4)--(6,1.4)node[right]{$\rho$};
		\draw[-,thick](6.1,5.4)--(6,5.4)node[right]{$\rho$};
		\draw[-,thick](6.1,2.3)--(6,2.3)node[right]{$\delta_a$};
		
		\draw[->,very thick,color=black] (2,2) to (1,2)node[above,xshift=0.5cm]{$aa$};
		\draw[-,very thick,color=black] (3,4.8) to (3.5,4.8)node[above]{$ad$};
		\draw[->,very thick,color=black] (3.5,4.8) to (4,4.8);
		\draw[->,dashed,very thick,color=gray!70!white] (3,1) to (4,1)node[above,xshift=-0.5cm]{$aa$};
		
		\draw[->,dashed,very thick,color=gray!70!white] (2,4.2) to (3,4.2)node[above,xshift=-0.5cm]{$da$};
		
		\draw[->,dashed,very thick,color=gray!70!white] (4,2.3) to (3,2.3)node[above,xshift=0.5cm]{$da$};
		\draw[->,dashed,very thick,color=gray!70!white] (5,1.7) to (4,1.7)node[above,xshift=0.5cm]{$aa$};
		\draw[->,dashed,very thick,color=gray!70!white] (5,2.6) to (6,2.6)node[above,xshift=-0.5cm]{$dd$};
		\draw[->,dashed,very thick,color=gray!70!white] (6,4.7) to (5,4.7)node[above,xshift=0.5cm]{$da$};
		
		\node[ellipse,
		draw = black, very thick,text=black,
		minimum width = 2cm, 
		minimum height = 1.2cm,label={[xshift=1.4cm,yshift=-0.5cm ]$\cX_t^{I,A}$}] (e) at (3.5,6) {};

		\begin{scope}[shift={(8.5,0)}]
		\node () at (3,7){Dual process};
		
		\draw[<-,thick](0,0)--(0,6.3)node[left,xshift=-0.2cm,yshift=-3cm ] {time};
		\draw[-,thick](-0.1,6)--(0,6)node[left,xshift=-0.1cm]{$t$};
		\node () at (1,-0.3){$1$};
		\node () at (2,-0.3){$2$};
		\node () at (3,-0.3){$3$};
		\node () at (4,-0.3){$4$};
		\node () at (5,-0.3){$5$};
		\node () at (6,-0.3){$6$};
		
		\draw[-,thick](0.9,4)--(1,4)node[left]{$\delta_d$};
		\draw[-,very thick,color=gray!70!white,densely dotted] (1,2.3) to (1,6);
		\draw[-,very thick,color=gray!70!white] (1,2.3) to (1,0.3);
		\draw[-,very thick,color=gray!70!white,densely dotted] (1,0.3) to (1,0);
		
		\draw[-,thick](1.9,1.3)--(2,1.3)node[left]{$\delta_d$};
		\draw[-,thick](1.9,2.7)--(2,2.7)node[left]{$\delta_a$};
		\draw[-,very thick,color=gray!70!white] (2,0) to (2,2.7);
		\draw[-,very thick,color=black] (2,2.7) to (2,4);
		\draw[-,very thick,color=black,densely dashed] (2,4) to (2,4.2);
		\draw[-,very thick,color=gray!70!white,densely dotted] (2,4.2) to (2,5.5);
		\draw[-,very thick,color=gray!70!white] (2,5.5) to (2,6);
		
		\draw[-,very thick,color=black,densely dashed] (3,0) to (3,3);
		\draw[-,thick](2.9,2)--(3,2)node[left]{$\delta_a$};
		\draw[-,very thick,color=gray!70!white] (3,6) to (3,4.8);
		\draw[-,very thick,color=black] (3,4.8) to (3,3);
		
		\draw[-,very thick,color=black,densely dashed] (4,6) to (4,2.6);
		\draw[-,very thick,color=black] (4,2.6) to (4,0);

		\draw[-,thick](5.1,1.5)--(5,1.5)node[right]{$\delta_d$};
		\draw[-,very thick,color=gray!70!white,densely dotted] (5,3.5) to (5,6);
		\draw[-,very thick,color=gray!70!white](5,3.5) to (5,1.7);
		\draw[-,very thick,color=black] (5,1.7) to (5,0);
		
		\draw[-,very thick,color=gray!70!white] (6,0) to (6,6);
		\draw[-,thick](6.1,2.3)--(6,2.3)node[right]{$\delta_a$};
		
		\draw[<-,very thick,dashed,color=gray!70!white] (2,2) to (1,2)node[above,xshift=0.5cm]{$aa$};
		\draw[<-,very thick,color=black] (3,4.8) to (3.5,4.8)node[above]{$da$};
		\draw[-,very thick,color=black] (3.5,4.8) to (4,4.8);
		\draw[<-,dashed,very thick,color=gray!70!white] (3,1) to (4,1)node[above,xshift=-0.5cm]{$aa$};
		
		\draw[<-,very thick,color=black] (2,4.2) to (2.5,4.2);
		\draw[-,color=black,very thick] (2.5,4.2) to (3,4.2)node[above,xshift=-0.5cm]{$ad$};
		
		\draw[<-,dashed,very thick,color=gray!70!white] (4,2.3) to (3,2.3)node[above,xshift=0.5cm]{$ad$};
		\draw[<-,color=black,very thick] (5,1.7) to (4,1.7)node[above,xshift=0.5cm]{$aa$};
		\draw[<-,dashed,very thick,color=gray!70!white] (5,2.6) to (6,2.6)node[above,xshift=-0.5cm]{$dd$};
		\draw[<-,dashed,very thick,color=gray!70!white] (6,4.7) to (5,4.7)node[above,xshift=0.5cm]{$ad$};

		\node[ellipse,
		draw = black, very thick,text=black,
		minimum width = 3cm, 
		minimum height = 1.2cm,label={[xshift=1.6cm,yshift=-1.65cm,color=black ]$\hat\cX_t^{J,\hat{A},t}$}] (e) at (4,-0.25) {};
		\end{scope}
		\end{tikzpicture}}
	\caption{CPS and the dual process. The CPS $\cX_t^{I,A}$ started with $I=\{2,3\}$ and $A=\{2,4,5,6\}$  running upwards,  and the dual $\hat\cX_s^{t,J,\cA_t}$ started from time $t$ in $J=\{4\}$ and $\mathcal{A}_t=\{2,3,6\}$  running downwards.  The states $(0,a)$, $(0,d)$, $(1,a)$ and $(1,d)$ are depicted with gray solid, gray dotted, black solid and black dashed lines, respectively.}\label{figure:graphical-constr+dual}
\end{figure}

\subsection{Stationary distributions and phase transition}\label{sec:statdis-phasetrans}

In what follows, we will make use of both the map-based as well as the set-based representation for notational convenience.

 Since type switches are independent of the infection and recovery dynamics, it is easy to verify that $\cA_t$ converges in distribution to its unique  stationary distribution $\pi_\alpha$, which lets any site in $S$ be active with probability $\alpha\ldef\frac\rho{\sigma+\rho}$ and dormant otherwise, independently of each other.
Thus, $$\mu^{triv}\ldef\delta_{\{0\}^{\bbZ^d}}\otimes\pi_\alpha$$  is a  \emph{trivial} stationary distribution of $(\xi_t)$. 
 We now investigate the existence, and convergence into, further stationary distributions.

\begin{theorem}\label{Theorem:05}
  Let $(\xi^1_t)$ be a CPS with initial distribution $\delta_{\{1\}^{\bbZ^d}}\otimes\pi_\alpha$.
  Then, $\xi_t^1\Rightarrow \xi_\infty^1$ as $t \to \infty$, where $\cL(\xi_\infty^1)$ is a stationary distribution of $(\xi_t)$  \emph{(the upper invariant measure)}. 
\end{theorem}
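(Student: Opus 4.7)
The plan is to combine stochastic monotonicity, inherited from the attractivity part of Theorem~\ref{thm:mon-add-att}, with the fact that the activity component, once started in $\pi_\alpha$, retains the law $\pi_\alpha$ at every time. Throughout I work in the set-based representation $\xi_t^1=(\cX_t,\cA_t)$ with $\cX_0=\bbZ^d$ and $\cA_0\sim\pi_\alpha$, write $\bar\mu_t$ for the law of $\xi_t^1$, and fix the partial order $(\cX,\cA)\preceq(\cX',\cA')$ iff $\cX\subseteq\cX'$ and $\cA=\cA'$.

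The first step is stochastic monotonicity: for $0\le s\le t$ I want $\int f\,d\bar\mu_t\le\int f\,d\bar\mu_s$ for every bounded measurable $f$ that is non-decreasing in $\cX$ at fixed $\cA$. To see this, I couple everything on the same graphical data $(\cU,\cV,\cT)$, let $(\cA_u)_{u\ge 0}$ be the activity process driven by $\cV$ with $\cA_0\sim\pi_\alpha$ (so $\cA_u\sim\pi_\alpha$ for every $u$, since the switching PPPs are independent of $\cU,\cT$ and each site performs an independent two-state Markov chain with stationary law $\pi_\alpha$), and run two infection processes sharing this $\cA$: $(\cX_u)_{u\ge 0}$ starting from $\bbZ^d$ at time $0$, and $(\tilde\cX_u)_{u\ge t-s}$ reset to $\bbZ^d$ at time $t-s$. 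Since $\cX_{t-s}\subseteq\bbZ^d=\tilde\cX_{t-s}$, attractivity gives $\cX_u\subseteq\tilde\cX_u$ for every $u\ge t-s$, so in particular $(\cX_t,\cA_t)\preceq(\tilde\cX_t,\cA_t)$. By time-homogeneity and $\cA_{t-s}\stackrel{d}{=}\cA_0$, the right-hand pair is distributed as $\xi_s^1\sim\bar\mu_s$, while the left-hand pair is $\sim\bar\mu_t$, yielding the claim.

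Convergence then follows by a Möbius-inversion argument on finite-dimensional marginals. For each finite $I,K\subset\bbZ^d$ and $A_K\subseteq K$, the indicator of $\{I\subseteq\cX_t\}\cap\{\cA_t\cap K=A_K\}$ is non-decreasing in $\cX$ at fixed $\cA$, so the first step gives that $t\mapsto\bbP(I\subseteq\cX_t,\,\cA_t\cap K=A_K)$ is non-increasing and hence convergent. Writing $\mathbbm{1}\{\cX_t\cap K=I\}=\sum_{J\subseteq K\setminus I}(-1)^{|J|}\mathbbm{1}\{I\cup J\subseteq\cX_t\}$ expresses every joint cylinder probability $\bbP(\cX_t\cap K=I,\,\cA_t\cap K=A_K)$ as a finite signed sum of such monotone probabilities, so all finite-dimensional marginals of $\xi_t^1$ converge. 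Because $F$ is finite and $F^{\bbZ^d}$ is compact, this is equivalent to weak convergence $\xi_t^1\Rightarrow\xi_\infty^1$ for some law $\bar\mu_\infty$. Invariance of $\bar\mu_\infty$ finally follows from the Feller property of the CPS semigroup $(P_r)$ — immediate from the finite-range graphical construction on the compact state space — via $\bar\mu_{t+r}=\bar\mu_t P_r\Rightarrow\bar\mu_\infty P_r$, which forces $\bar\mu_\infty P_r=\bar\mu_\infty$ for every $r\ge 0$.

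The only mildly delicate point is the joint handling of $(\cX_t,\cA_t)$, since only the $\cX$-marginal actually contracts while $\cA_t$ remains $\pi_\alpha$-distributed at every $t$; the inclusion-exclusion step above circumvents this because fixing the $\cA$-coordinate on a finite window leaves the resulting cylinder monotone in $\cX$. Otherwise this is the standard Liggett-style construction of the upper invariant measure (cf.\ \cite{D95}) carried over to the enriched state space.
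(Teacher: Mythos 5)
Your proposal is correct and follows essentially the same route as the paper: a restart-plus-attractivity coupling using the stationarity of the activity process to get monotonicity in $t$ of suitable cylinder probabilities (your joint probabilities $\bbP(I\subseteq\cX_t,\,\cA_t\cap K=A_K)$ are, up to the constant factor $\pi_\alpha(\cA_t\cap K=A_K)$, exactly the paper's conditional quantities $\Phi_t$), followed by inclusion--exclusion to recover all finite-dimensional marginals and standard compactness/Feller arguments for the limit and its invariance. No substantive differences.
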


  The proof of this theorem will be very similar to that of \cite[(2.7) Theorem, p.\ 123]{D95} and will make use of the following lemma (cf.\ \cite[(2.8) Lemma, p.\ 123]{D95}):
\begin{lemma}\label{Lemma:01b}
  Let $\xi_t^1$ as specified above, then for any sets $H,A,D\subseteq S$ with $A\cap D=\emptyset$ the function
  $$
	\Phi_t(H,A,D)\ldef \mathbb{P}\big(H\subseteq(\cX_t)^c\big|
	    A\subseteq \cA_t, D\subseteq (\cA_t)^c\big)
  $$
  is increasing in $t$.
\end{lemma}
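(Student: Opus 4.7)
The plan is to adapt the proof of the analogous statement for the classical Contact Process (cf.\ \cite[(2.8) Lemma]{D95}) to our setting, the new complication being the presence of the activity field $(\cA_t)$. My strategy is to realise $\xi^1_s$ and $\xi^1_t$ simultaneously as functionals of a single stationary bi-infinite graphical construction, use attractivity to obtain a pointwise monotonicity between two different time origins, and then exploit time-translation invariance to turn this into the desired inequality for $\Phi$.

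More concretely, I would first extend the PPP families $\cU,\cV,\cT$ of the graphical construction from $S\times[0,\infty)$ to $S\times\bbR$, and let $(\cA_t)_{t\in\bbR}$ denote the stationary activity field driven by $\cV$; this is well-defined, since the per-site switching chain is reversible with equilibrium $\pi_\alpha$, which Kolmogorov's theorem extends to a stationary process on all of $\bbR$. Well-definedness of the infection dynamics on $\bbZ^d$ then follows from the same finite-cluster argument (cf.\ \cite[(2.1) Theorem]{D95}) already used in the forward construction, applied uniformly to time slabs of length $t_0$. For $u\geq 0$ let $\cX^{(u)}_r$ denote the infection set at time $r\geq -u$ obtained from this construction by starting with all sites infected at time $-u$. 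Fix $0\leq s<t$: trivially $\cX^{(t)}_{-s}\subseteq\bbZ^d=\cX^{(s)}_{-s}$, and on $[-s,0]$ both processes are driven by exactly the same graphical data, in particular the same activity field $(\cA_r)_{r\in[-s,0]}$. Attractivity (Theorem~\ref{thm:mon-add-att}(3)) then yields $\cX^{(t)}_0\subseteq \cX^{(s)}_0$ almost surely, and hence $\{H\cap\cX^{(s)}_0=\emptyset\}\subseteq \{H\cap\cX^{(t)}_0=\emptyset\}$. Since $\cA_0$ is the same random variable in both realisations, conditioning on $\{A\subseteq\cA_0,\,D\subseteq(\cA_0)^c\}$ preserves this set inclusion, so
$$
\prob\big(H\cap \cX^{(s)}_0=\emptyset \,\big|\, A\subseteq\cA_0,\,D\subseteq(\cA_0)^c\big)
 \leq \prob\big(H\cap \cX^{(t)}_0=\emptyset \,\big|\, A\subseteq\cA_0,\,D\subseteq(\cA_0)^c\big).
$$

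By time-translation invariance of the bi-infinite construction (both the activity field and the driving PPPs are stationary in time), the joint law of $(\cX^{(u)}_0,\cA_0)$ coincides with that of $(\cX_u,\cA_u)$ under the law of $\xi^1$ from the statement of Theorem~\ref{Theorem:05}, for every $u\geq 0$. Therefore the previous display is precisely $\Phi_s(H,A,D)\leq \Phi_t(H,A,D)$, which is what was claimed. The only obstacle I anticipate is technical, namely the rigorous justification of the bi-infinite extension of the graphical construction on all of $\bbZ^d$; this is routine once the forward construction is in hand, using the stationarity of the per-site activity chains and the finite-cluster property applied slab-by-slab. An entirely analogous argument could alternatively be written via the duality of Theorem~\ref{thm:duality}, expressing $\Phi_t$ as the extinction probability at time $t$ of the dual CPS started from $H$ with the conditional activity law $\pi_\alpha(\,\cdot\,|\,A\subseteq\cdot,\,D\subseteq\cdot^c)$, and then invoking that $\emptyset$ is absorbing.
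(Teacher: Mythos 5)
Your argument is correct, and its two essential ingredients are exactly those of the paper's proof: attractivity with respect to the all-infected initial condition (Theorem~\ref{thm:mon-add-att}(3)) and stationarity of the activity process $(\cA_t)$. The difference is purely in the bookkeeping. The paper restarts the forward process at time $s$, writing $\cX_{t+s}=\cX_t^{\cX_s}$ and comparing with $\cX_t^{\bbZ^d}$ driven by the same activity field $\cA_t^{\cA_s}$, then invokes stationarity of $(\cA_t)$ to identify the resulting conditional probability with $\Phi_t$; this needs no new construction beyond the one already in Section~\ref{ssn:graphical_construction}. You instead build a two-sided stationary graphical representation on $S\times\bbR$, launch the all-infected process from times $-t$ and $-s$, and read everything off at time $0$; this makes the comparison of the conditioning events slightly cleaner (it is literally the same event $\{A\subseteq\cA_0,\,D\subseteq(\cA_0)^c\}$ for both times, so the pointwise inclusion of events transfers immediately to the conditional probabilities), at the cost of having to justify the bi-infinite extension. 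That extension is indeed routine --- one can define $x\in\cA_r$ by the type of the last switching event in $(-\infty,r]$, which is automatically stationary, measurable with respect to $\cV$, and assigns each site activity probability $\rho/(\sigma+\rho)=\alpha$ --- but the paper's forward restart avoids it altogether. Both versions share the implicit restriction that the conditioning event have positive probability, which holds in the only case needed (finite $A,D$) for the proof of Theorem~\ref{Theorem:05}.
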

\begin{proof}
  This follows from a restarting argument combined with the attractivity established in Theorem~\ref{thm:mon-add-att}(3):
	Let $H,A,D\subseteq S$ with $A\cap D=\emptyset$ and $s,t>0$. Then,
	\begin{align*}
	  \Phi_{t+s}(H,A,D)
	   &= \bbP\big(H\subseteq(\cX_{t+s})^c\big|A\subseteq\cA_{t+s},D\subseteq(\cA_{t+s})^c\big)\\
	   &= \bbP\big(H\subseteq(\cX_t^{\cX_s})^c\big|A\subseteq\cA_t^{\cA_s},D\subseteq(\cA_t^{\cA_s})^c\big)\\
	   &\geq \bbP\big(H\subseteq(\cX_t^{\bbZ^d})^c
	             \big|A\subseteq\cA_t^{\cA_s},D\subseteq(\cA_t^{\cA_s})^c\big)\\
	   &= \Phi_t(H,A,D),
	\end{align*}
	where we used stationarity of $(\cA_t)$ in the last step.
\end{proof}

\begin{proof}[Proof of Theorem \ref{Theorem:05}]
  We show convergence  of the   finite dimensional distributions  (depending on finitely many sites).  For this, let $m\in\bbN$, $x_1,\dots,x_m\in S$ and $i_1,\dots,i_m\in F$ be fixed. Now, denote by $H=\{x_k\mid 1\leq k\leq m, i_{k,1}=0\}$, the set of the $x_k$ such that $i_k$ is healthy, and $I,A,D$ accordingly. Then,
  \begin{align*}
    \mathbb{P}\big( & \xi_t^1(x_1)=i_1,\dots,\xi_t^1(x_m)=i_m\big)\\
     &= \mathbb P(A\subseteq\cA_t)\cdot\mathbb P(D\subseteq(\cA_t)^c)
         \cdot \mathbb P\big(H\subseteq(\cX^1_t)^c, I\subseteq\cX^1_t
                         | A\subseteq\cA_t, D\subseteq(\cA_t)^c\big),
  \end{align*}
  where the first two factors multiply to $\alpha^{|A|}(1-\alpha)^{|D|}$ and the third can be written  as a finite sum  in terms of $\Phi_t(\bullet,A,D)$ using the inclusion-exclusion-formula. Hence, Lemma~\ref{Lemma:01b} implies the desired convergence.   The stationarity of the  limiting distribution follows from standard arguments.
\end{proof}

 The upper and the lower (trivial) stationary distributions $\cL(\xi^1_\infty)$ and $\mu^{triv}=\delta_{\{0\}^{\bbZ^d}}\otimes\pi_\alpha$ may or may not be distinct. 
This gives rise to the following notion of survival.
\begin{definition}[Survival]\label{def:survival}
  A CPS $(\xi_t)$ is said to
     \emph{survive}, if $\cL(\xi^1_\infty)\neq\mu^{triv}$.
  Otherwise \emph{it goes extinct}.
\end{definition}%
\begin{remark}[Strong survival]\label{rem:strong-surv}
  By a duality argument (cf.\ \cite[p.\ 36]{L99}) combined with the  stationarity of $(\cA_t)$, it can be shown that the above notion of (``infinite'') survival of the CPS is equivalent to ``finite'' survival, i.e.
  $$\bbP(\cX_t^{\{0\},\pi_\alpha}\neq\emptyset\text{ for all }t\geq0)>0.$$
  For the CP it has been shown in \cite[Theorem 1.12]{L99} that this is even equivalent to the generally more restrictive notion of \emph{strong} survival, given as
  $$\bbP(0\in\cX_t^{\{0\},\pi_\alpha}\text{ infinitely often})>0,$$
  which can be very well interpreted as `endemic' behaviour.
  This result, however, is entangled with the proof of \emph{complete convergence}. We conjecture that both also hold for the CPS, but defer the proofs to future work.
\end{remark}

Since the CPS is monotone in the parameters $\lambda_{aa}$, $\lambda_{ad}$, $\lambda_{da}$, $\lambda_{dd}$, $\delta_a$, $\delta_d$, we can  consider  a critical parameter $\lambda_\bullet^c$ or $\delta_\bullet^c$, for some $\bullet$,   separating a survival from an extinction phase in the obvious way,  while  fixing all other parameters. The coupling results in Section \ref{section:coupling} will give us some bounds on the critical parameters and ensure that a phase transition can occur.

There are cases though, where critical parameters do not exist, e.g.\ when $\lambda_{aa}$ is large and $\delta_a$ is small enough. Here,  survival does not depend on recovery rate of the  dormant sites such that there is no critical $\delta_d^c$.

\section{Relation between CPS and CP}\label{sec:comparison}

In this section we pursue two different approaches to compare the CPS with ordinary
CPs.
At first we discuss scaling limits in the switching rates. We show that for high
$\sigma$ and $\rho$ the CPS is well approximated by a CP with  ``effective''  rates $\lambda^*$ and
$\delta^*$ that can be computed explicitly from the model parameters, whereas for low $\sigma$ and $\rho$ the CPS is close to a CP in  a \emph{static environment}.
Secondly, we give rigorous couplings between the CPS and suitable CPs in
Section~\ref{section:coupling},  establishing phase transitions. 
We will discuss various simulation results in Section~\ref{ssn:simulations}.

\subsection{Fast and slow switching limits}\label{sec:switching-limits}
In this section, we will  consider  a rescaled CPS $(\xi_t^h)_{t\ge 0}$ whose  switching rates $\sigma,\rho$ are replaced by suitable $h\sigma$ and $h\rho$, and where we let $h$ either go to $\infty$ or to $0$.

Note that for a CP $(\eta_t)$ the product measure $\cL(\eta_t)\otimes\pi_\alpha$ is for fixed $t$ a distribution on $F^S=\{\{0,1\}\times\{a,d\}\}^{\bbZ^d}$.

\begin{theorem}[Fast switching]\label{thm:fast-switching-limit}
  For $k \in \mathbb{N}$ let $(\xi_t^k)_{t\ge 0}$ be rescaled CPS as above, with switching parameters $\sigma k$, $\rho k$, all other parameters fixed and intial infections given by $I\subseteq S$. Then,  for each fixed $t \ge 0$,
$$
\cL(\xi_t^k) 
\to
\cL(\eta_t)\otimes\pi_\alpha$$
as $k\to\infty$. 
Here, $(\eta_t)$ is a CP with initial infections given by $I$, infection rate
  $$
    \lambda^*
      \ldef \alpha^2\lambda_{aa}
             + \alpha(1-\alpha)(\lambda_{ad}+\lambda_{da})
             + (1-\alpha)^2\lambda_{dd}
  $$
  and recovery rate
  $$
    \delta^*
      \ldef \alpha\delta_a+(1-\alpha)\delta_d.
  $$
  
\end{theorem}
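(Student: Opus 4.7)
The plan is to employ a classical slow--fast (averaging) argument. Writing the generator of $(\xi_t^k)$ as $L^k = L_I + k L_A$, where $L_I$ encodes infection and recovery (and depends on both $\cX$ and $\cA$) and $L_A$ encodes the type switching (and depends on $\cA$ only, acting as independent two-state Markov chains at each site with active probability $\alpha$, so that $\pi_\alpha$ is the unique invariant measure of $L_A$), the fast component mixes in time $O(1/k)$. Hence, as $k\to\infty$, the activity variable should equilibrate instantaneously to $\pi_\alpha$, the slow component $(\cX_t^k)$ should be driven by the averaged generator
\begin{equation*}
  \bar L g(\cX) \ldef \int L_I g(\cX,\cA)\,\pi_\alpha(d\cA),
\end{equation*}
and $\cX_t^k$ and $\cA_t^k$ should decorrelate in the limit.

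The first step is to evaluate $\bar L$ on a cylinder function $g$ depending on $\cX$ through a finite set $K\subset S$. Under $\pi_\alpha$ the activity marks are independent Bernoulli($\alpha$) across sites, so averaging the recovery rate at $x\in\cX$ gives $\alpha\delta_a+(1-\alpha)\delta_d=\delta^*$, and averaging the rate of a potential infection from $y\in\cX$ to $x\notin\cX$ gives
\begin{equation*}
  \alpha^2\lambda_{aa}+\alpha(1-\alpha)\lambda_{ad}+(1-\alpha)\alpha\lambda_{da}+(1-\alpha)^2\lambda_{dd}=\lambda^*.
\end{equation*}
Hence $\bar L$ coincides exactly with the generator of the basic CP $(\eta_t)$ with rates $\lambda^*$ and $\delta^*$ from the statement.

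To upgrade this heuristic to the claimed weak convergence at a fixed time $t$, I would localise via the graphical construction of Section~\ref{ssn:graphical_construction}. For a cylinder test function supported on a finite set $K\subset S$, the restriction of $\xi_t^k$ to $K$ is measurable with respect to the Poisson events in a finite random space--time window $W_K$, namely the backward light-cone of $K\times\{t\}$, and by the standard finite-cluster argument of \cite[(2.1) Theorem]{D95}, $W_K$ is almost surely finite \emph{uniformly in $k$} because the rates $\lambda_\tau$, $\delta_\bullet$ are not scaled with $k$. On the resulting finite-state Markov system the classical Markov-chain averaging theorem applies: the occupation measures of the independent $k$-accelerated type chains converge in probability to $\pi_\alpha$, forcing the trajectory of $(\cX_s^k)_{s\in[0,t]}$ restricted to $K$ to converge in distribution to that of the averaged CP $(\eta_s)_{s\in[0,t]}$. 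For the activity marginal, irreducibility together with the acceleration by $k$ gives convergence of the law of $\cA_t^k$ restricted to $K$ to the product measure with marginal $\pi_\alpha$ at an exponential rate $O(e^{-ck})$, and independence in the limit follows from the standard argument that $\cA_t^k$ restricted to $K$ is already determined by the switching Poisson processes on any short window $[t-\varepsilon,t]$, while the effect of such a vanishing window on the restriction of $\cX_t^k$ to $K$ tends to zero.

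The main obstacle I anticipate is setting up the slow--fast averaging cleanly in the infinite-volume setting, since the standard averaging theorems are typically phrased for finite-dimensional systems; the localisation via the graphical construction is precisely what resolves this. A minor technical point worth flagging is that the initial distribution of $\cA_0^k$ is not explicitly specified in the theorem: since the type chains equilibrate in time $O(1/k)$, the stated convergence holds for every fixed $t>0$ regardless of the initial type law, whereas at $t=0$ one additionally needs $\cL(\cA_0^k)\Rightarrow\pi_\alpha$ (for instance $\cA_0\sim\pi_\alpha$).
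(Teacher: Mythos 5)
Your proposal is essentially correct, and it shares the decisive technical step with the paper's proof: localising via the graphical construction to an almost surely finite backward interaction region built only from the (unscaled) infection and recovery events, so that the problem reduces to a finite system. Where you genuinely diverge is in the core limit mechanism. The paper does not invoke a generator decomposition or an averaging theorem at all: it couples all the $\xi_t^k$ on one probability space by successively \emph{adding} independent switching events to each line of the fixed graph $\Gamma$, observes that there are only finitely many relevant space--time points (the arrow and $\delta$-event times $t_1,\dots,t_{n_\Gamma}$), and shows that the activity states at those points converge to i.i.d.\ Bernoulli($\alpha$) marks; the infection flow through $\Gamma$ with these limiting marks is then \emph{exactly} a CP with rates $\lambda^*,\delta^*$, and dominated convergence over $\Gamma$ finishes the proof. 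Your route via $L^k=L_I+kL_A$ and the averaging theorem is sound and arguably more systematic, but it needs one extra point of care that the paper's event-based argument sidesteps: after conditioning on the random region $\Gamma$ (equivalently $W_K$), the restricted dynamics are not naively Markov, and what saves the argument is that $\Gamma$ is measurable with respect to the $\cU$- and $\cT$-processes, which are independent of the switching processes $\cV$, so the conditional law of the accelerated type chains is unchanged — you should state this explicitly before invoking the finite-state averaging theorem. On the other hand, your proposal handles two points the paper's proof glosses over: the asymptotic product structure $\cL(\eta_t)\otimes\pi_\alpha$ (via the short-window argument for $\cA_t^k$) and the observation that the unspecified initial law of $\cA_0^k$ is irrelevant for $t>0$; both are correct and worth having.
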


\begin{proof}
As before, we show convergence of the finite-dimensional distributions of $(\xi_t^k)$, that is, of the laws restricted to finite subsets of
  sites $S'\subset S$, $|S'|<\infty$.\\
  Via the graphical construction we couple the random states $\xi_t^k, k \in \mathbb{N}$ in the
  following way: We start by constructing the interaction graph $\Gamma$ of
  $(\xi_s^0)_{0\leq s\leq t}$ originating on the sites $S'$. Note that since the switching rates in this case are 0, there are no type switches involved here. 
  By a standard argument, $\Gamma$
  will almost surely involve only finitely many sites, say $S_\Gamma\supseteq S'$.
  (This mainly involves starting the dual process at time $t$ in $I=S'$ and
  estimating the backwards infection spread by a suitable Yule Process.)
  Then, we iteratively construct $(\xi_t^{k+1})$ from $(\xi_t^k)$ for $k\geq0$
  by adding a new pair of independent Poisson process of rates $\sigma$ and $\rho$ to each line
  $s\in S_\Gamma$ providing new $\sigma$- and $\rho$-events.\\
  Since there are a.s.\ only finitely many arrows and $\delta$-events in $\Gamma$, say at times $t_1,\ldots,t_{n_\Gamma}$, the activity states at those times and at any
  $s\in S_\Gamma$, $(\xi_{t_i}^k(s)_2)_{i=1,\ldots,n_\Gamma}$ converge in
  distribution as $k \to \infty$ to $n_\Gamma$ independent Bernoulli-$\alpha$ random variables.\\
  Hence, outside an event with vanishing probability conditioned on $\Gamma$,
  the infection process flowing through the graphical construction of
  $\xi_t^k$ on $S_\Gamma$ behaves as an infection flowing through $\Gamma$,
  stopping at $\delta_a$- and $\delta_d$-events with probability $\alpha$ and
  $1-\alpha$ respectively and passing through arrows of type $aa$ and $dd$ with 
  probability $\alpha^2$ and $(1-\alpha)^2$ and through arrows of types $ad$ and
  $da$ with probability $\alpha(1-\alpha)$. This now is equal in distribution to
  the desired contact process $(\eta_t)$ with rates $\lambda^*$ and $\delta^*$.\\
  
  Finally, we obtain the desired convergence by taking expectations of the conditional
  probabilites given $\Gamma$ and considering dominated convergence.
\end{proof}

Simulations suggest that the approximation in this theorem already works  rather well for relatively  low values of $k$ (cf.\ Figure~\ref{figure:CPS-bounds}).

\bigskip

Similarly we can show that  for a suitable  limit $h\to0$, the family $(\xi_t^h)$ converges to a CP in \emph{static environment}, by which we mean that activity states are only
random in the initial state $\cA_0$ and then remain constant over time.

\begin{theorem}[Slow switching]\label{thm:slow-switching-limit}
  Let $(\xi_t^{1/k})$ as above, with switching parameters $\sigma/k$, $\rho/k$, all other parameters fixed, and initial infections given by $I\subseteq S$. Then, for each fixed $t \ge 0$,
$$
\cL(\xi_t^k) 
\to
\cL(\xi^0_t)$$
as $k\to\infty$.   
Here, $(\xi^0_t)$ is a CPS with switching rates $\rho=\sigma=0$, hence it can be considered as a CP in a static environment determined by $\mathcal{A}_0$ and initially infected individuals given by $I$.
\end{theorem}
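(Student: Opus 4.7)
The plan is to mirror the proof of Theorem~\ref{thm:fast-switching-limit} and show convergence of the finite-dimensional distributions of $\xi^{1/k}_t$ via an explicit coupling on a common probability space. First I would realise each $\xi^{1/k}$ and $\xi^0$ from shared recovery and infection Poisson families $\mathcal U,\mathcal T$ and the same initial configuration $(\cX_0,\cA_0)$, equipping $\xi^{1/k}$ with independent switching PPPs of rates $\sigma/k$ and $\rho/k$, while $\xi^0$ carries no switching events at all.

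Next, I would fix $t>0$ and a finite observation window $S'\subset S$ and build an enlarged, \emph{type-agnostic} interaction graph $\Gamma$ by backward exploration from $S'\times\{t\}$: starting with $S'$ at time $t$, follow every arrow (of any type in $\{aa,ad,da,dd\}$) whose head lies in the current exploration set back to its tail, and record every $\delta_a$- and $\delta_d$-mark encountered. Dominating this exploration by a Yule process as in \cite[(2.1) Theorem]{D95}, the set of involved sites $S_\Gamma\supseteq S'$ is almost surely finite. Because $\Gamma$ is built purely from $\mathcal U$ and $\mathcal T$ and is type-blind, it simultaneously upper-bounds the dependency cluster of $\xi_t(x)$, $x\in S'$, for \emph{any} CPS variant built on these PPPs, irrespective of the switching dynamics.

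The decisive step is then to argue that on the event
\begin{equation*}
  E_k\ldef\Bigl\{\text{no switching event of }\xi^{1/k}\text{ occurs on }\bigcup_{x\in S_\Gamma}\{x\}\times[0,t]\Bigr\}
\end{equation*}
one has $\xi_t^{1/k}|_{S'}=\xi_t^0|_{S'}$: on $E_k$ every site in $S_\Gamma$ keeps its initial activity state throughout $[0,t]$, so the type-matching conditions for infection paths inside $S_\Gamma$ reduce to those of the static-environment process $\xi^0$, and by construction of $\Gamma$ sites outside $S_\Gamma$ cannot influence $S'$. Conditioning on $\Gamma$ (which is independent of the switching PPPs) gives $\bbP(E_k^c\mid\Gamma)\leq|S_\Gamma|(\sigma+\rho)t/k$, so $\bbP(E_k^c)\to0$ by dominated convergence. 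For any bounded $f:F^{S'}\to\bbR$ this then yields
\begin{equation*}
  \bigl|\mean[f(\xi_t^{1/k}|_{S'})]-\mean[f(\xi_t^0|_{S'})]\bigr|\leq 2\|f\|_\infty\bbP(E_k^c)\longrightarrow 0,
\end{equation*}
which is the desired finite-dimensional convergence.

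I expect the only genuine subtlety to lie in the correct formulation of $\Gamma$: it must be built from the recovery/infection PPPs alone and must include arrows of \emph{every} type, so that $S_\Gamma$ is a shared valid upper bound on the dependency cluster for both $\xi^{1/k}$ and $\xi^0$. Once that is in place, no law-of-large-numbers averaging of activity states is required, and the slow-switching limit turns out to be technically lighter than its fast-switching counterpart.
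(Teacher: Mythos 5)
Your argument is correct and follows essentially the same route as the paper: the paper's (very brief) proof also works on the graphical construction, uses the a.s.\ finiteness of the interaction graph from the fast-switching proof, and removes the switching events in the limit — it just realises this by successively thinning the $\sigma$- and $\rho$-events with probability $\tfrac{k}{k+1}$ to obtain a nested coupling of the $\xi^{1/k}$, whereas you use independent switching clocks per $k$ and the union bound $\bbP(E_k^c\mid\Gamma)\leq|S_\Gamma|(\sigma+\rho)t/k$. Your write-up is in fact more explicit than the paper's about the key point that $\Gamma$ must be explored type-agnostically so that $S_\Gamma$ bounds the dependency cluster uniformly in the switching dynamics.
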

\begin{proof}
  This can be proved analogously to the previous theorem by first building the
  graphical construction of $\xi_t^1$ and then successively thinning out the
  $\sigma$- and $\rho$-events, keeping those events only with probabilities
  $\frac k{k+1}$, to obtain $\xi_t^{1/(k+1)}$ from $\xi_t^{1/k}$.
\end{proof}
\label{section:CPSRE}

 Note that the CP in static environment has been studied comprehensively over the last decades, cf.\  eg.\  Bramson, Durrett and Schonmann \cite{BDS91} or Klein \cite{Kl94}.  In these references the assumption is made that the intensities $\lambda(x,y)$ for the Poisson processes indicating the infections from $x$ to $y$ are independent and identically distributed for all  neighboring  pairs of $x,y\in S$. 
However, considering the CPS-limit obtained above,  this is possible only in the special case where the infection rates do not depend on the activity state at all, i.e.\ when the original CPS is a CPREE. 

To the author's best  knowledge, the scaling limits of Theorems~\ref{thm:fast-switching-limit} and \ref{thm:slow-switching-limit} have not yet been shown for the CPREE. However, Broman conjectured that $\xi_t^k$ behaves like the ordinary CP $(\eta_t)$ for large $k$ in \cite[Remark after Proposition 1.9]{Broman2007}.

\begin{remark}
  Although the iid condition of \cite{BDS91} and \cite{Kl94} is not satisfied for the static version of the CPD, one can still give conditions for survival and extinction:
  First of all note that the duality of Theorem~\ref{thm:duality} still holds and hence, survival is equivalent to finite survival (cf.\ Remark~\ref{rem:strong-surv}).
  Finite survival is heavily linked to the existence of an infinite percolation cluster containing $0_{\bbZ^d}$ in the sense that, if $\cA_0$ is distributed according to $\pi_\alpha$ and $\alpha$ is smaller than the critical value $p_c$ for site percolation on $\bbZ^d$, the infection goes extinct almost surely.
  However, in the extreme case of $\delta_d=0$, whenever $\alpha<1$ there is finite survival since in the event $0_{\bbZ^d}\not\in\cA_0$ the particle at $0_{\bbZ^d}$ will never recover.
\end{remark}

\subsection{Couplings between CPS and the classical Contact Process}\label{section:coupling}

In this section we employ and extend the methods and results of \cite{Broman2007} to construct couplings between CPS and CP.
For this, we will make heavy use of and build upon \cite[Theorem 1.4]{Broman2007}, which shows that an inhomogeneous Poisson Point Process $P$ with rate randomly switching between two fixed values, say $\mathfrak a_0\leq\mathfrak a_1$, can be coupled to a homogeneous Poisson Point Process $L$ with some (explicit) rate $\bar{\mathfrak a}\in(\mathfrak a_0,\mathfrak a_1)$ such that $L$ is dominated by $P$, in the sense that $L_t\subseteq P_t$ for all $t$, and where $\bar{\mathfrak a}$ is the largest value with this property. Unfortunately, a corresponding dominating Poisson Point Process $U$, with $U_t\supseteq P_t$ for all $t$, can only exist in trivial cases, i.e.\ for rates of at least $\mathfrak a_1$.

When we use this coupling argument for the point process of $\delta$-events that are not blocked by activity states -- occurring with rate $\delta_a$ while an individual is active and rate $\delta_d$ while an individual is dormant, and further use the trivial upper bound for the infection rates, we obtain a dominating CP for our CPS as follows:

\begin{theorem}[Dominating CP]\label{thm:upper-bound-CP}
  Let $(\xi_t)$ be a CPS where the initial distribution is of the form $\mu=\nu\otimes\pi_\alpha$ and define
  \begin{align*}
    \lambda_{\max}
      &\ldef \max{\{\lambda_{aa},\lambda_{ad},\lambda_{da},\lambda_{dd}\}},\\[.5em]
    \bar\delta
      &\ldef \frac12\big(\delta_a+\delta_d+\rho+\sigma
              - \sqrt{(|\delta_a-\delta_d|-\rho-\sigma)^2
                + 4\sigma|\delta_a-\delta_d|}\big)
       \geq0.
  \end{align*}
  Then $(\xi_t)$ gets dominated by a basic CP $(\eta_t)$ with infection rate $\lambda_{\max}$ and recovery rate $\bar\delta$, i.e.\ we can couple the processes via the graphical construction such that for any initial configuration of infected particles $I\subseteq S$ we have
  \begin{gather*}
    \cX_t^I\subseteq\cZ_t^{I}\quad\text{for all}\quad t\geq 0,
  \end{gather*}
  where $\cZ_t^I$ denotes the set of infected particles of $\eta_t^I$.
\end{theorem}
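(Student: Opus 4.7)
The strategy is to couple everything within the graphical representation of Section~\ref{ssn:graphical_construction} so that the CP $(\eta_t)$ literally uses a superset of the infection arrows and a subset of the recovery marks effectively seen by the CPS. The infection side is routine because infection attempts in the CPS at any time have rate at most $\lambda_{\max}$; the recovery side is the delicate part and is precisely where Broman's Theorem~1.4 enters, producing the constant $\bar\delta$.

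\medskip

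\textbf{Recovery coupling.} Fix $x\in S$. Because $\mu=\nu\otimes\pi_\alpha$, the activity trajectory $(\mathbbm{1}\{x\in\cA_t\})_{t\geq0}$ is a stationary two-state CTMC with rates $\sigma$ (active$\to$dormant) and $\rho$ (dormant$\to$active). Define the \emph{effective recovery process}
\begin{gather*}
R^x \ldef \bigl(U^{a,x}\cap\{t:x\in\cA_t\}\bigr)\cup\bigl(U^{d,x}\cap\{t:x\notin\cA_t\}\bigr),
\end{gather*}
which, conditionally on $(\cA_t)$, is an inhomogeneous PPP with rate $\delta_a$ or $\delta_d$ depending on the activity state of $x$. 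This is exactly the setup of \cite[Theorem~1.4]{Broman2007}, which applied to the switching rate process $R^x$ yields a homogeneous PPP $L^x$ of rate $\bar\delta$ with $L^x\subseteq R^x$ almost surely, and $\bar\delta$ is the optimal such constant; one checks (by solving the implicit fixed-point relation in Broman's formula, or by substitution) that it equals the expression in the statement. Because the families $\{(U^{a,x},U^{d,x},V^{x,a\to d},V^{x,d\to a})\}_{x\in S}$ are independent across $x$, we can perform this coupling site by site, obtaining independent $\{L^x\}_{x\in S}$.

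\medskip

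\textbf{Infection coupling.} For each ordered neighbour pair $(x,y)$ let $\bar T^{x\to y}$ be a PPP of rate $\lambda_{\max}$ constructed so that it contains all arrows that are \emph{actually used} in the CPS, i.e.\ every point of $T^{x\to y,\tau_1\tau_2}$ occurring at a time $t$ with $(\xi_t(x)_2,\xi_t(y)_2)=(\tau_1,\tau_2)$. The union of the used arrows, conditional on the activity field, is an inhomogeneous PPP of rate $\lambda_{\tau_1(t)\tau_2(t)}\leq\lambda_{\max}$, so we may enrich it by an independent PPP of rate $\lambda_{\max}-\lambda_{\tau_1(t)\tau_2(t)}$ to obtain $\bar T^{x\to y}$; by the PPP superposition theorem this is a homogeneous rate-$\lambda_{\max}$ PPP, and by construction contains every used CPS arrow. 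Independence across pairs is preserved since the original families in $\mathcal{T}$ are independent and the enrichment noise is chosen independently.

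\medskip

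\textbf{Domination.} Build the CP $(\eta_t)$ on the same probability space from the graphical representation using $\{\bar T^{x\to y}\}_{(x,y)}$ as infection arrows and $\{L^x\}_{x\in S}$ as recovery marks, with $\cZ_0=I=\cX_0$. By the standard monotone induction on the jumps in any finite space-time window (which is well-posed by the argument following \cite[(2.1) Theorem]{D95}), one verifies that $\cX_t\subseteq\cZ_t$: every infection of the CPS arises from a used arrow, hence from an arrow of $\eta_t$; and every recovery of $\eta_t$ occurs at a time in $L^x\subseteq R^x$, hence also recovers the CPS at $x$ if it is infected. So the infected set of the CPS can never exit that of the CP.

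\medskip

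\textbf{Main obstacle.} The nontrivial ingredient is Step~1: the value of $\bar\delta$ comes from Broman's optimal-coupling result for a PPP whose rate follows a two-state CTMC, and invoking it correctly is what determines the precise formula. The infection side and the monotone-induction argument are straightforward adaptations of classical graphical-coupling techniques for the basic CP.
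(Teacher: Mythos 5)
Your proof is correct and follows essentially the same route as the paper: the trivial bound $\lambda_{\max}$ on the infection arrows (which the paper obtains by first invoking monotonicity from Theorem~\ref{thm:mon-add-att} to reduce to a CPREE) combined with a site-wise application of Broman's Theorem~1.4 to the effective recovery marks, which is exactly the content of Broman's Theorem~1.6 that the paper cites. The only item you omit is the short algebraic verification that $\bar\delta\geq0$, which is part of the statement and which the paper checks by expanding $(\delta_a+\delta_d+\rho+\sigma)^2$.
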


\begin{proof}
  By monotonicity (cf.\ Theorem~\ref{thm:mon-add-att}) it is clear that $(\xi_t)$ is dominated by another CPS $(\hat\xi_t)$ with the same $\delta$-rates but $\hat\lambda_\bullet=\lambda_{\max}$ for all $\bullet\in\{aa,ad,da,dd\}$, which is in fact a CPREE. Now, the above theorem follows from \cite[Theorem 1.6]{Broman2007}, i.e.\ by applying \cite[Theorem 1.4]{Broman2007} for any $s\in S$ on the $\delta$-events on $s$.
  The non-negativity of $\bar\delta$ holds, since
  \begin{align*}
    (\delta_a+\delta_d+\rho+\sigma)^2
     &= (2\min\{\delta_a,\delta_d\}+|\delta_a-\delta_d|+\rho+\sigma)^2\\
     &\geq (|\delta_a-\delta_d|+\rho+\sigma)^2\\
     &= (|\delta_a-\delta_d|-\rho-\sigma)^2 + 4(\rho+\sigma)|\delta_a-\delta_d|.
  \end{align*}
\end{proof}

\begin{remark}\label{remark:distancing-dormancy}
  As mentioned in the introduction, there are two natural intepretations for the CPD:  (microbial) host dormancy with $\delta_d=0$, and  social distancing interpreted as (human) dormancy with $\delta_d=\delta_a$. Since for $\rho\to0$ it holds $\bar\delta\to\min\{\delta_a,\delta_d\}$, one can see by Theorem~\ref{thm:upper-bound-CP} leads to significantly different bounds. Also, the fast switching approximation from Theorem~\ref{thm:fast-switching-limit} can differ strongly for these two strategies. Indeed, in both cases the human dormancy is favorable.
\end{remark}

For a lower bound we now strive to apply Broman's coupling to the infection arrows originating from a single particle, instead of to the recovery events. To account for the stochastic dependencies between such arrows, we need to extend \cite[Theorem 1.4]{Broman2007}:

\begin{lemma}\label{lem:broman-extension}
  For $k\in\bbN$ let $N^{1,0},\dots,N^{k,0}$ be independent Poisson Point Processes (PPPs) with rate $\lambda_0$ and $N^{1,1},\dots,N^{k,1}$ independent PPPs with rate $\lambda_1$ such that $N^{i,0}$ is also independent of $N^{j,1}$ for all $i,j\in[k]$ with $i\neq j$. Further, let $(J_t)$ 
  be a jump process in $\{0,1\}$ jumping to $0$ at rate $\sigma$ and to $1$ at rate $\rho$. Then, there are $k$ independent PPPs $L^{1},\dots,L^{k}$ with rate
  \begin{gather*}
    \bar\lambda(\lambda_0,\lambda_1,\sigma,\rho,k)
      \ldef \frac12\bigg(
              \lambda_1 + \lambda_0 + \tfrac\rho k+\tfrac\sigma k
               - \sqrt{(|\lambda_1-\lambda_0|-\tfrac\rho k - \tfrac\sigma k)^2
                  + \tfrac{4\sigma}k|\lambda_1-\lambda_0|}\bigg)
  \end{gather*} 
  such that almost surely for any $i\in[k]$ it holds
  $$
    L^i
      \subseteq\{\tau\in N^{i,j}\mid J_\tau=j\}.
  $$
\end{lemma}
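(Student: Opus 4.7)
My plan is to reduce to the original Broman coupling \cite[Theorem~1.4]{Broman2007} by first aggregating the $k$ given families into a single switching PPP with ``boosted'' rates $k\lambda_0,k\lambda_1$, applying Broman's result once, and then splitting the resulting homogeneous PPP back into $k$ independent pieces by tracking the source label of each atom. Concretely, I would set $N^j\ldef\bigcup_{i=1}^k N^{i,j}$ for $j\in\{0,1\}$, a PPP of rate $k\lambda_j$ by superposition, and $P\ldef\{\tau\in N^{J_\tau}\}$, so that $P$ is a $(k\lambda_0,k\lambda_1)$-switching PPP driven by $J$. Each atom $\tau$ of $P$ carries a canonical \emph{source label} $\iota(\tau)\in[k]$, namely the unique index with $\tau\in N^{\iota(\tau),J_\tau}$.

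Next, I would apply \cite[Theorem~1.4]{Broman2007} to the pair $(P,J)$ to obtain a homogeneous PPP $L\subseteq P$ of rate
\[
\bar\lambda_{\mathrm{Br}}(k\lambda_0,k\lambda_1,\sigma,\rho)
 = \tfrac{1}{2}\bigl(k\lambda_0+k\lambda_1+\sigma+\rho-\sqrt{(k|\lambda_1-\lambda_0|-\sigma-\rho)^2+4\sigma k|\lambda_1-\lambda_0|}\bigr).
\]
Pulling a factor of $k^2$ out of the radicand shows that this quantity equals exactly $k\cdot\bar\lambda(\lambda_0,\lambda_1,\sigma,\rho,k)$, which matches the target rate up to the factor $k$ that will be absorbed in the splitting step. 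The essential feature of Broman's construction for my purposes is that $L$ is a measurable function of $(P,J)$ alone and does not depend on the source labels $\iota(\tau)$.

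Finally, I would set $L^i\ldef\{\tau\in L:\iota(\tau)=i\}$ and argue via the coloring theorem that these form $k$ independent PPPs of rate $\bar\lambda(\lambda_0,\lambda_1,\sigma,\rho,k)$. Conditional on $J$, the processes $N^{1,j},\ldots,N^{k,j}$ are i.i.d.\ of rate $\lambda_j$, so the source labels of atoms of $N^j$ are conditionally i.i.d.\ uniform on $[k]$ independently of the unlabelled pattern. The cross-independence hypothesis between $N^{i,0}$ and $N^{j,1}$ for $i\neq j$ extends this to independence of labels across atoms occurring in different $J$-phases. Since $L$ is measurable with respect to the unlabelled data and $J$, the labels of atoms of $L$ inherit the same i.i.d.\ uniform law, and the thinning theorem produces the desired mutually independent $L^i$ of rate $\bar\lambda(\lambda_0,\lambda_1,\sigma,\rho,k)$. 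Domination $L^i\subseteq\{\tau\in N^{i,j}:J_\tau=j\}$ is then automatic from the definition. The main obstacle, in my view, is the careful application of the coloring theorem in this time-inhomogeneous setting: one has to verify that label uniformity is preserved both conditionally on $J$ and across regime changes, and it is precisely the cross-independence assumption in the hypothesis that makes this work.
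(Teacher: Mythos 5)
Your proposal is correct and follows essentially the same route as the paper's proof: superpose the $k$ families into aggregate PPPs of rates $k\lambda_0$ and $k\lambda_1$, apply \cite[Theorem~1.4]{Broman2007} once to obtain a single dominated PPP $L$ of rate $k\bar\lambda$, and then split $L$ into $k$ independent pieces using the fact that the source labels of the atoms are i.i.d.\ uniform on $[k]$ and independent of $(J_t)$. Your write-up is somewhat more explicit than the paper's (which simply calls the subdivision ``straightforward''), in particular on the rate identity and the colouring argument, but there is no substantive difference in approach.
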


\begin{proof}
  W.l.o.g. assume that $\lambda_1\geq\lambda_0$.
  We apply \cite[Theorem 1.4]{Broman2007} to the PPPs $$N^i:=\bigcup_{j=1}^kN^{j,i}$$ which are of rate $k\cdot\lambda_i$ respectively, delivering a PPP $L$ of rate $k\cdot\bar\lambda$ such that any $\tau\in L$ holds $\tau\in N^{J_\tau}$. (Note that for $k=1$ this already concludes the proof.)

  Now, noting that the random variables $U_\tau$ for $\tau\in N^0\cup N^1$, given by $U_\tau=i\in[k]$ iff $\tau\in N^{i,j}$ for some $j$, are iid, uniformly distributed on $[k]$ and independent of $(J_t)$, it is straightforward to subdivide $L$ into $L^1,\ldots,L^k$ such that the Lemma holds.
\end{proof}

To illustrate the lemma above and the strategy of the following proof, we provide Figure~\ref{figure:approxarrows}.
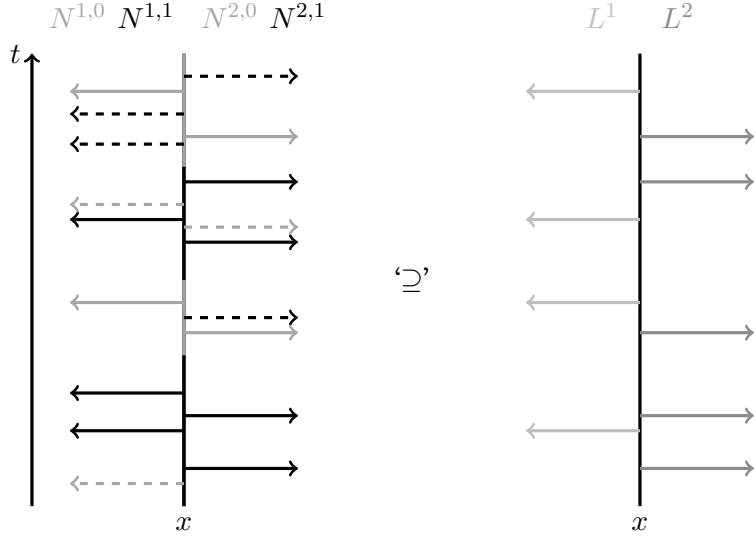
\begin{figure}[h]
		\centering
		\begin{tikzpicture}[darkstyle/.style={circle,fill=black,inner sep=1pt}]
		
		\draw[->, very thick] (-2,0)--(-2,6)node[left] {$t$};
		\draw[-, very thick] (0,6)--(0,0)node[below] {$x$};
		\node () at (-1.4,6.5){\textcolor{gray!70!white}{$N^{1,0}$}};
		\node () at (-0.5,6.5){\textcolor{black}{$N^{1,1}$}};
		\node () at (1.5,6.5){\textcolor{black}{$N^{2,1}$}};
		\node () at (0.6,6.5){\textcolor{gray!70!white}{$N^{2,0}$}};
		\draw[-, very thick,color=black] (0,0)--(0,2)node[left] {};
		\draw[-, very thick,color=gray!70!white] (0,2)--(0,3)node[left] {};
		\draw[-, very thick,color=black] (0,3)--(0,4.5)node[left] {};
		\draw[-, very thick,color=gray!70!white] (0,4.5)--(0,6)node[left] {};
		\draw[->,very thick,color=black] (0,0.5) to (1.5,0.5)node[above,xshift=0.5cm]{};
		\draw[->,very thick,color=black] (0,1.2) to (1.5,1.2)node[above,xshift=0.5cm]{};
		\draw[->,very thick,dashed,color=black] (0,2.5) to (1.5,2.5)node[above,xshift=0.5cm]{};
		\draw[->,very thick,color=black] (0,3.5) to (1.5,3.5)node[above,xshift=0.5cm]{};
		\draw[->,very thick,color=black] (0,4.3) to (1.5,4.3)node[above,xshift=0.5cm]{};
		\draw[->,very thick,dashed,color=black] (0,5.7) to (1.5,5.7)node[above,xshift=0.5cm]{};

		\draw[->,very thick,color=black] (0,1) to (-1.5,1)node[above,xshift=0.5cm]{};
		\draw[->,very thick,color=black] (0,1.5) to (-1.5,1.5)node[above,xshift=0.5cm]{};
		\draw[->,very thick,color=black] (0,3.8) to (-1.5,3.8)node[above,xshift=0.5cm]{};
		\draw[->,very thick,dashed,color=black] (0,4.8) to (-1.5,4.8)node[above,xshift=0.5cm]{};
		\draw[->,very thick,dashed,color=black] (0,5.2) to (-1.5,5.2)node[above,xshift=0.5cm]{};
		
		\draw[->,very thick,dashed,color=gray!70!white] (0,0.3) to (-1.5,0.3)node[above,xshift=0.5cm]{};
		\draw[->,very thick,color=gray!70!white] (0,2.7) to (-1.5,2.7)node[above,xshift=0.5cm]{};
		\draw[->,very thick,dashed,color=gray!70!white] (0,4) to (-1.5,4)node[above,xshift=0.5cm]{};
		\draw[->,very thick,color=gray!70!white] (0,5.5) to (-1.5,5.5)node[above,xshift=0.5cm]{};
		
		\draw[->,very thick,color=gray!70!white] (0,2.3) to (1.5,2.3)node[above,xshift=0.5cm]{};
		\draw[->,very thick,dashed,color=gray!70!white] (0,3.7) to (1.5,3.7)node[above,xshift=0.5cm]{};
		\draw[->,very thick,color=gray!70!white] (0,4.9) to (1.5,4.9)node[above,xshift=0.5cm]{};

		\node () at (3,3){`$\supseteq$'};
		\begin{scope}[shift={(6,0)}]
			\draw[-, very thick] (0,6)--(0,0)node[below] {$x$};
			\node () at (-0.5,6.5){\textcolor{gray!50!white}{$L^1$}};
			\node () at (0.5,6.5){\textcolor{gray!90!white}{$L^2$}};
			\draw[->,very thick,color=gray!90!white] (0,0.5) to (1.5,0.5)node[above,xshift=0.5cm]{};
			\draw[->,very thick,color=gray!90!white] (0,1.2) to (1.5,1.2)node[above,xshift=0.5cm]{};
			\draw[->,very thick,color=gray!90!white] (0,4.3) to (1.5,4.3)node[above,xshift=0.5cm]{};
			\draw[->,very thick,color=gray!90!white] (0,2.3) to (1.5,2.3)node[above,xshift=0.5cm]{};
			\draw[->,very thick,color=gray!90!white] (0,4.9) to (1.5,4.9)node[above,xshift=0.5cm]{};

			\draw[->,very thick,color=gray!50!white] (0,1) to (-1.5,1)node[above,xshift=0.5cm]{};
			\draw[->,very thick,color=gray!50!white] (0,3.8) to (-1.5,3.8)node[above,xshift=0.5cm]{};
			\draw[->,very thick,color=gray!50!white] (0,2.7) to (-1.5,2.7)node[above,xshift=0.5cm]{};
			\draw[->,very thick,color=gray!50!white] (0,5.5) to (-1.5,5.5)node[above,xshift=0.5cm]{};
		\end{scope}
		\end{tikzpicture}
		\caption{This figure illustrates Lemma \ref{lem:broman-extension} for $k=2$ in the context of the graphical construction. We interpret the states $0$ and $1$ of $J_t$ as the activity of a site $x\in S$ at time $t$, indicated by the colors gray and black, respectively. Gray arrows belong to the PPPs $N^{i,0}$, black ones to $N^{i,1}$. They are depicted solid iff they fit the activity of $x$. Arrows to the left belong to $N^{1,j}$, those to the right to $N^{2,j}$. The lemma aims to estimate the solid arrows (black and gray) pointing to the left from below (in the sense of\ `$\subseteq$') by a PPP $L^1$ and the ones pointing to the right by $L^2$ (as depicted on the right) such that $L^1$ and $L^2$ are independent.}\label{figure:approxarrows}
\end{figure}
Note that in the above lemma, the PPPs $N^{i,0}$ and $N^{i,1}$ for fixed $i$ do not need to be independent, which leaves room for the above mentioned dependencies. With this, we can now provide a complementary result to Theorem \ref{thm:upper-bound-CP}, involving a dominated CP, by taking the trivial bound for the $\delta$-rates and applying Lemma~\ref{lem:broman-extension} to the $\lambda$-rates.

\begin{theorem}[Dominated CP]\label{thm:lower-bound-CP}
  Let $(\xi_t)$ be a CPS whose initial distribution is of the form $\mu=\nu\otimes\pi_\alpha$. Further, we define
  \begin{align*}
    \bar\lambda
     &\ldef 
       \frac12\bigg(\lambda_a+\lambda_d+\tfrac{\rho+\sigma}{|\cN|}
          - \sqrt{\big(|\lambda_a-\lambda_d|-\tfrac{\rho+\sigma}{|\cN|}\big)^2
              + \tfrac{4\sigma}{|\cN|}|\lambda_a-\lambda_d|}\bigg)
      \geq0
  \end{align*}
  with $\lambda_a=\min\{\lambda_{aa},\lambda_{ad}\}$
  and $\lambda_{d}=\min\{\lambda_{da},\lambda_{dd}\}$.
  Then $(\xi_t)$ dominates a basic CP $(\eta_t)$ with infection rate $\bar\lambda$ and recovery rate $\delta_{\max}=\max\{\delta_a,\delta_d\}$.
\end{theorem}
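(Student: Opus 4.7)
The plan is to mirror the proof of Theorem~\ref{thm:upper-bound-CP} but with the roles of infection arrows and recovery events interchanged: the recoveries of the dominated CP are handled by the trivial upper bound at rate $\delta_{\max}$, while the infection arrows are extracted site-by-site from the CPS using the Broman-type Lemma~\ref{lem:broman-extension}, which plays the role that \cite[Theorem~1.4]{Broman2007} had in the proof of Theorem~\ref{thm:upper-bound-CP}.

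Fix a site $x\in S$ with neighbours $y_1,\dots,y_k$, $k=|\mathcal N|$. From the four infection PPPs $T^{x\to y_i,\tau}$, $\tau\in\{aa,ad,da,dd\}$, I would construct two PPPs $N^{i,1}, N^{i,0}$ of rates $\lambda_a$ and $\lambda_d$ by first selecting between the four $T$-processes according to $y_i$'s current activity state and then independently thinning down to the minimum rate $\lambda_a$ or $\lambda_d$, so that every arrival of $N^{i,1}$ (resp.\ $N^{i,0}$) is an effective CPS arrow from $x$ to $y_i$ whenever $x$ is active (resp.\ dormant), irrespective of $y_i$'s activity. The thinning makes the conditional intensity deterministic, so each $N^{i,j}$ is a genuine homogeneous PPP. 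For $i\neq j$ the families $N^{i,\cdot}$ and $N^{j,\cdot}$ are independent since they come from disjoint underlying $T$-processes, while $N^{i,0}$ and $N^{i,1}$ may still share a dependence on $V^{y_i}$; this is exactly the independence structure permitted by Lemma~\ref{lem:broman-extension}. Applying that lemma with $\lambda_0=\lambda_d$, $\lambda_1=\lambda_a$ and $(J_t)=V^x_t$ then yields independent PPPs $L^{x,1},\dots,L^{x,k}$ of rate $\bar\lambda$, each of whose arrivals is an effective infection arrow from $x$ to $y_i$ in the CPS.

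For the recovery side, I would augment the graphical representation with an independent auxiliary PPP $\tilde U^x$ on each site and define $R^x$ as the union of the effective events of $U^{a,x}$ and $U^{d,x}$ together with thinnings of $\tilde U^x$ at rates $\delta_{\max}-\delta_a$ (when $x$ is active) and $\delta_{\max}-\delta_d$ (when $x$ is dormant); the conditional intensity of $R^x$ given the activity trajectory is then the deterministic constant $\delta_{\max}$, so $R^x$ is a homogeneous PPP of rate $\delta_{\max}$ that contains every effective CPS recovery. Defining $(\eta_t)$ via the standard graphical construction using $\{L^{x,i}\}$ as infection arrows and $\{R^x\}$ as recovery events, the inclusions at the level of arrows and recoveries yield $\cZ_t^I\subseteq\cX_t^I$ by a direct infection-path comparison analogous to the one in Section~\ref{ssn:graphical_construction}. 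Non-negativity of $\bar\lambda$ follows from an algebraic estimate parallel to the one in the proof of Theorem~\ref{thm:upper-bound-CP}.

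The key obstacle is to verify that $(\eta_t)$ actually has the distribution of a basic CP with the prescribed rates, i.e.\ that the collection $\{L^{x,i}\}\cup\{R^x\}$ consists of jointly independent PPPs. Across different sites this is immediate from the independence of the underlying CPS ingredients. Within a single site, both $L^{x,i}$ and $R^x$ depend a priori on $V^x$; the point is that by the Broman-type construction underlying Lemma~\ref{lem:broman-extension} the conditional intensity of each $L^{x,i}$ given $V^x$ is the deterministic constant $\bar\lambda$, which forces $L^{x,i}$ to be independent of $V^x$, and by construction the same holds for $R^x$. Combined with the disjointness of the remaining randomness used in the two constructions, this yields the required joint independence, and hence the domination claim follows.
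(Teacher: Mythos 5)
Your proposal is correct and follows essentially the same route as the paper: bound the recoveries trivially by $\delta_{\max}$, group the outgoing arrows from $x$ to each neighbour $y_i$ according to $y_i$'s activity to form the pairs $N^{i,0},N^{i,1}$, and apply Lemma~\ref{lem:broman-extension} with $J$ the activity indicator of $x$. The only differences are cosmetic -- the paper performs the reduction to rates $\lambda_a,\lambda_d,\delta_{\max}$ by invoking the monotonicity of Theorem~\ref{thm:mon-add-att} rather than by explicit thinning/superposition, and it leaves implicit the joint-independence check that you spell out.
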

\begin{proof}
  First of all, w.l.o.g.\ assume that $\delta_a=\delta_d=\delta_{\max}$, $\lambda_{aa}=\lambda_{ad}=\lambda_a$ and $\lambda_{da}=\lambda_{dd}=\lambda_d$. Otherwise, consider a CPS $(\bar\xi_t)$ with these rates, which is dominated by $(\xi_t)$ by monotonicity. Further, assume that $\lambda_a\geq\lambda_d$ or exchange the roles of $a$ and $d$.
  
  Let $x\in S$ and enumerate $\{y\in S\mid y\sim x\}=\{y_1,\ldots,y_{|\cN|}\}$. Further, for $i\in[|\cN|]$ let $A_i=\{t\geq0\mid y_i\in \cA_t\}$, the active times of $y_i$, and define
  \begin{align*}
    N^{i,0}
     &\ldef (T^{x\to y,da}\cap A_i) \cup (T^{x\to y,dd}\cap A_i^c)
    \text{ as well as }\\
    N^{i,1}
     &\ldef (T^{x\to y,aa}\cap A_i) \cup (T^{x\to y,ad}\cap A_i^c).
  \end{align*}
  Now, both $(N^{i,0})$ and $(N^{i,1})$ are collections of PPPs with rates $\lambda_d$ and $\lambda_a$ respectively, such that, letting $J_t=\indicator_{\cA_t}(x)$, Lemma~\ref{lem:broman-extension}, provides 
  independent PPPs $(L^{x\to y})_{y\sim x}$ each with rate $\bar\lambda$, dominated by the infection arrows of the CPS in the desired sense. Repeating this for every $x\in S$, using all the resulting $L^{x\to y}$ to build a graphical construction, delivers the desired dominated CP with infection rate
$\bar\lambda(\lambda_d,\lambda_a,\sigma,\rho,|\cN|)$. Insertion concludes the proof.
(Non-negativity of $\bar\lambda$ follows analogously to that of $\bar\delta$.)
\end{proof}

\begin{remark}[Sharpness and further improvements of the bounds]\label{remark:sharpness}
Trivially, both the dominating and dominated CP bounds become sharp when $\lambda_\bullet$ and $\delta_\bullet$ are constant, i.e.\ when the CPS is a CP itself.

  Further, for extreme values of $\sigma$ and $\rho$ the bounds can also be sharp, since for $\rho>|\delta_a-\delta_d|$ as $\sigma\to 0$ it holds that $\bar\delta\to\delta_{\max}$. 
  This implies that the coupling is good when dormant states are lost and the CPREE becomes a CP itself. In particular, this shows that $\bar\delta$ is a much better bound than the trivial bound $\min\{\delta_a,\delta_d\}$. Similar arguments can be made for $\bar\lambda$.

  Adapting the proof above slightly, considering \emph{incoming} arrows instead of outgoing ones, one can also choose $\lambda_a=\min\{\lambda_{aa},\lambda_{da}\}$ and $\lambda_{d}=\min\{\lambda_{ad},\lambda_{dd}\}$. This may be used to improve the value of $\bar\lambda$ and thus to further refine the bounds obtained from Theorem~\ref{thm:lower-bound-CP}.
\end{remark}

\begin{remark}[Couplings for the CPD]\label{remark:CPD}
  In special cases such as the CPD, these bounds can be very bad: Here, the dominated CP, which has infection rate $\bar\lambda=0$ and recovery rate $\delta_a>0$, will never survive when started with finitely many infections.
  However, in order to ensure survival one can construct an alternative coupling between the CPD (with $\delta_d=0$ or $\delta_d=\delta_a$) and oriented percolation by adapting the arguments in \cite[p.\ 138-142]{D95} to our setting. This gives the existence of a critical $\delta^c$ for fixed $\sigma$, $\rho$ and $\lambda$.
\end{remark}

\begin{remark}[Couplings for the CPREE]
\label{remark:CPREE}
  For the CPREE the dominating CP with $\lambda_{\max}=\lambda$ and $\bar\delta$ from Theorem~\ref{thm:upper-bound-CP} looks promising. Indeed, if we let $\sigma,\rho\to\infty$ simultaneously, we observe that
  \[
    \bar\delta \to \alpha\delta_a+(1-\alpha)\delta_d,
  \]
  which coincides with $\delta^*$ from Theorem~\ref{thm:fast-switching-limit}.
  See Figure~\ref{figure:CPREE} for a comparison via simulation.
\end{remark}

\begin{remark}[Non-existence of critical infection rate for the CPD]
\label{remark:CPD2}
Note that in the special case of the CPD, there are choices of $\sigma$, $\rho$ and $\delta_d$ such that the process does not survive for any choice of $\lambda$ and $\delta_a$, i.e.\ $\lambda^c(\sigma,\rho,\delta_d,\delta_a)=\infty$ for all $\delta_a$. This can be shown similarly to \cite[p.\ 2401-2403]{R08} by making a connection to continuum percolation and giving a coupling with a multi-type-branching process.
\end{remark}

\begin{remark}[Monotonicity in the switching parameters]\label{remark:mon-switching} In some special cases one can even show monotonicity in the parameters $\sigma$ and $\rho$ via the graphical construction.
For example, it is straightforward to couple two CPREE's $(\cX_t,\cA_t)(\sigma_1,\rho_1,\delta_a,\delta_d,\lambda)$ and $(\cY_t,\cB_t)(\sigma_2,\rho_2,\delta_a,\delta_d,\lambda)$ with $\sigma_1\leq\sigma_2$, $\rho_1\geq\rho_2$ and $\delta_d\leq\delta_a$ such that we have
$$
\cB_t\subseteq\cA_t \quad\text{ and }\quad \cX_t\subseteq \cY_t\quad\text{ for all }t\geq 0.
$$
This comes from the fact that the dormant state is only in favor for the infection because it decreases the recovery rate of the particles (if $\delta_d\leq\delta_a$) and does not harm the spread of the infection.

On the other hand, for the CPD with $\delta_a=\delta_d$ the dormant state is harmful for the infection in the sense that the recovery rate does not decrease and some infections are not allowed. Hence, we can show monotonicity in $\sigma$ and $\rho$ for this special case as well.
\end{remark}

\subsection{Simulations}\label{ssn:simulations}

Figure~\ref{figure:CPS-bounds} illustrates all results of Sections~\ref{sec:switching-limits} and \ref{section:coupling} in comparison to a single CPS (top left). This shows that, even when $\sigma$ and $\rho$ are well within the range of the other parameters, the fast switching limit CP (top right) can give an apparently reasonably good approximation to the CPS, while neither dominating CP (below left) nor dominated CP (below right) nearly come as close.

Meanwhile, Figure~\ref{figure:CPREE} shows, as argued in Remark~\ref{remark:CPREE}, that the dominating CP (right) can be a good approximation for the CPREE (left). Note that here with $\sigma=\rho=5$, neither is switching particularly fast, nor extreme in the sense of Remark~\ref{remark:sharpness}, where $\rho\gg\sigma$.

Finally, Figure~\ref{figure:strongSurv} suggests that the critical parameter $\lambda^c$ for (finite) survival of the CPD under consideration lies in the interval $[7,8]$.
Since in that particular process sites are active half of the time on average, intuitively one would assume that roughly a quarter of infection arrows are successful. This argument suggests that $\lambda^c\approx4\lambda^c_{CP}$, where $\lambda^c_{CP}$ denotes the critical parameter of a CP with recovery rate $\frac12$. (Note that this corresponds to the effective rates from Theorem~\ref{thm:fast-switching-limit}.) However, as shown in \cite[Theorem 1.3]{L95}, $4\lambda^c_{CP}\leq 4\cdot\frac12\cdot 1.95 = 3.9$. Thus, intuition is off here roughly by a factor of $2$.
It is reasonable to assume that this factor increases significantly as switching gets slower and the CPS gets closer and closer to a CP in static environment.

\begin{figure}
	\hspace*{-1.25cm}%
	\begin{subfigure}{.65\textwidth}
		\includegraphics[width=0.88\linewidth,trim= {0 0 3.01cm 0},clip]{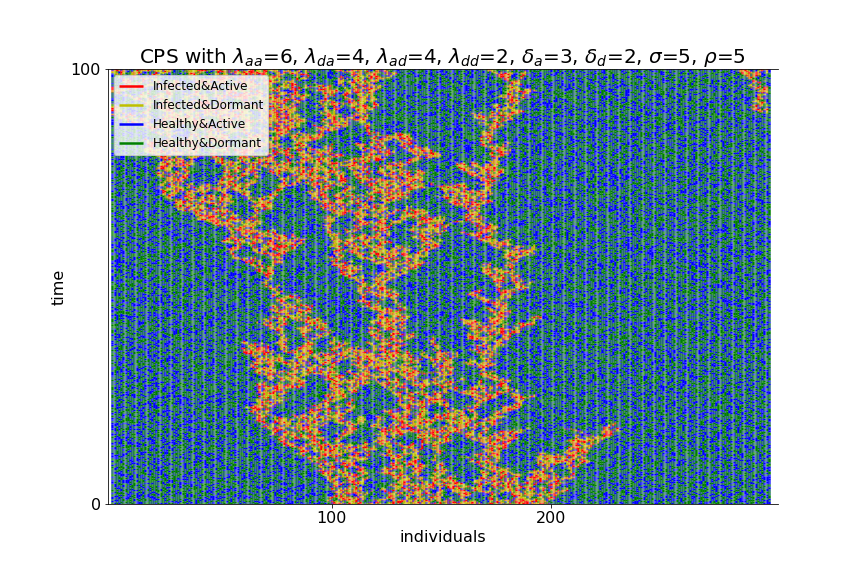}%
	\end{subfigure}\hspace*{-2.5em}
	\begin{subfigure}{.65\textwidth}
		\includegraphics[width=0.88\linewidth,trim= {3.01cm 0 0 0},clip ]{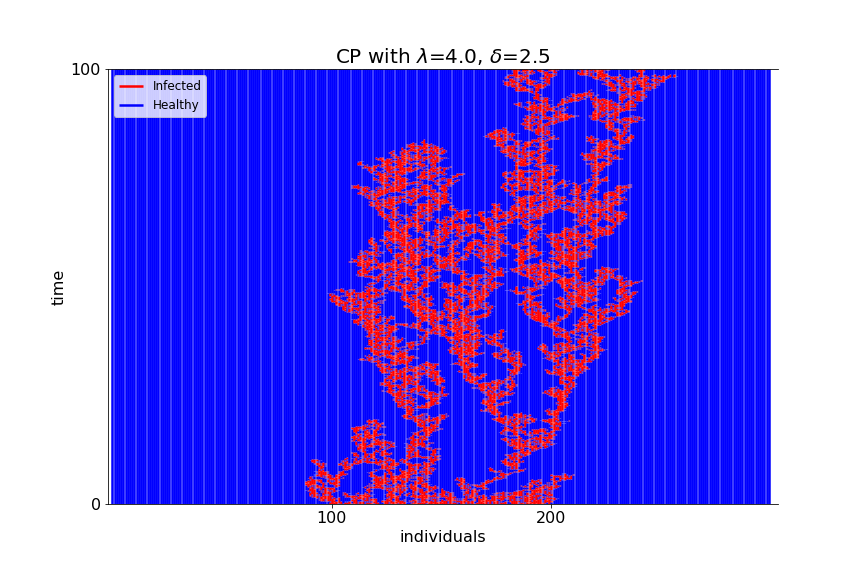}%
	\end{subfigure}%
    \phantom.\\
	\hspace*{-1.25cm}%
	\begin{subfigure}{.65\textwidth}
		\includegraphics[width=0.88\linewidth,trim= {0 0 3.01cm 0},clip]{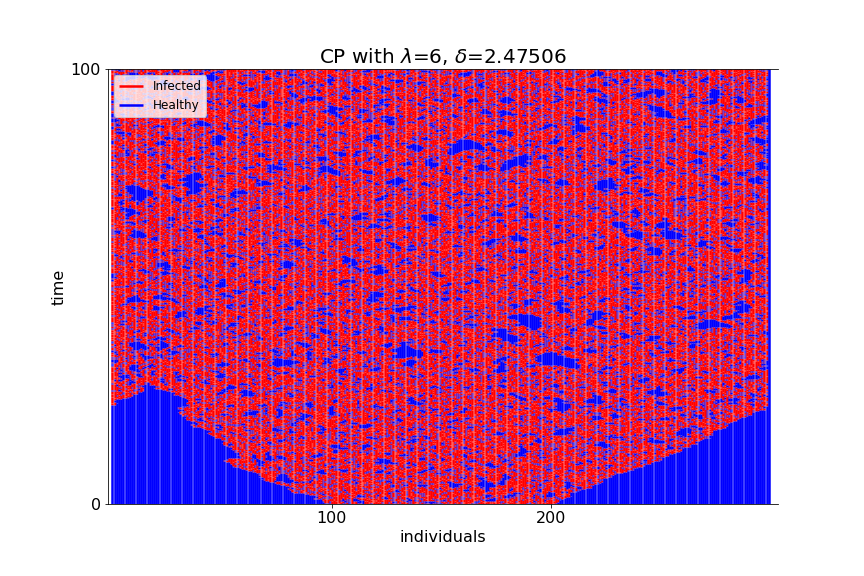}%
	\end{subfigure}\hspace*{-2.5em}
	\begin{subfigure}{.65\textwidth}
		\includegraphics[width=0.88\linewidth,trim= {3.01cm 0 0 0},clip ]{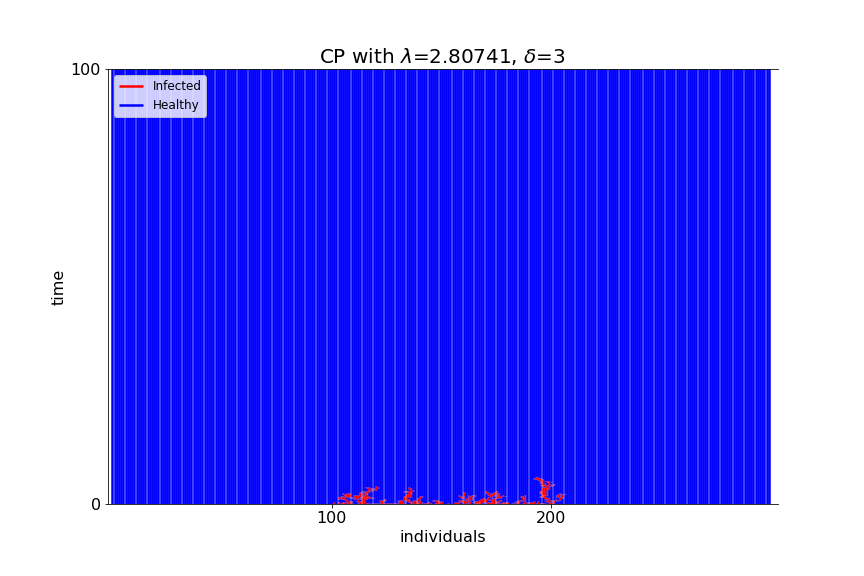}%
	\end{subfigure}%
	\caption{
	  CPS (top left), fast switching limit CP ($\lambda^*,\delta^*)$
	  (top right), dominating CP ($\lambda_{\max},\bar\delta$) (below left) and
	  dominated CP ($\bar\lambda,\delta_{\max}$) (below right). The colors are
	  chosen in the same way as in Figure~\ref{figure:graphical-constr+dual}.
	}
	\label{figure:CPS-bounds}
\end{figure}

\begin{figure}
	\hspace*{-1.cm}%
	\begin{subfigure}{.65\textwidth}
		\includegraphics[width=0.88\linewidth,trim= {0 0 3.01cm 0},clip]{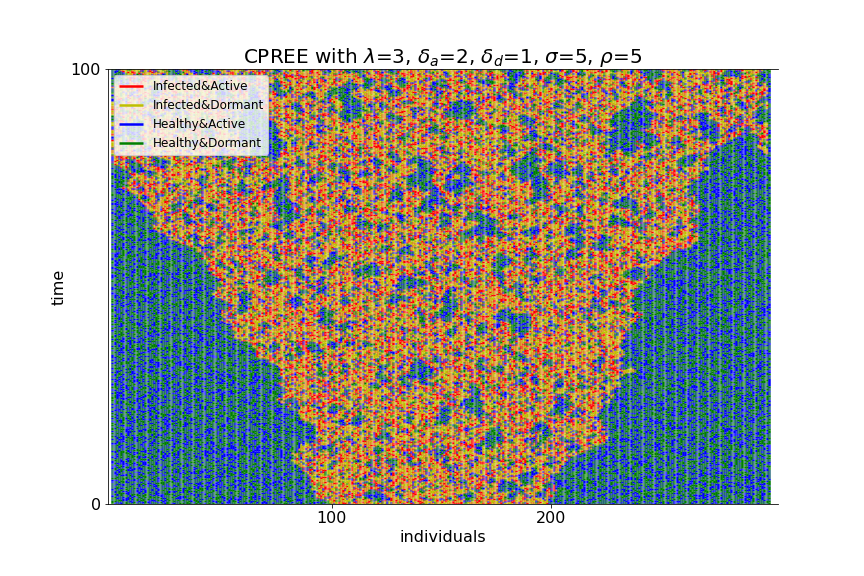}%
	\end{subfigure}\hspace*{-2.5em}
	\begin{subfigure}{.65\textwidth}
		\includegraphics[width=0.88\linewidth,trim= {3.01cm 0 0 0},clip ]{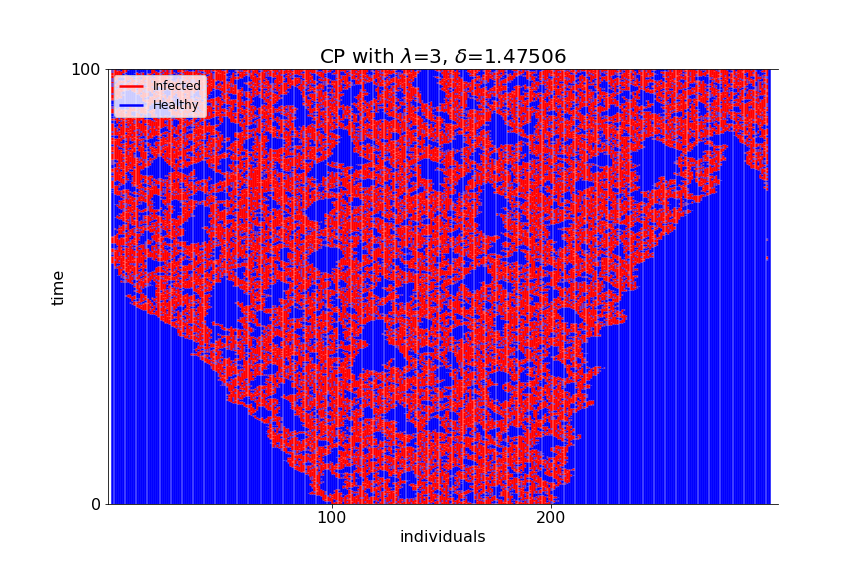}%
	\end{subfigure}%
	\caption{
	  CPREE (left), dominating CP ($\lambda_{\max},\bar\delta$) (right).
	}
	\label{figure:CPREE}%
\end{figure}

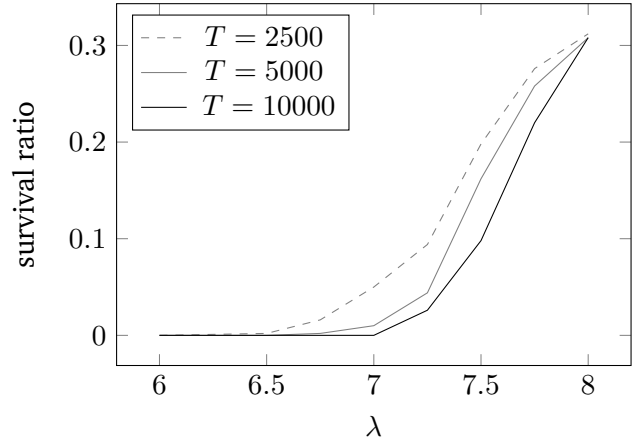
\begin{figure}
  \begin{tikzpicture}
    \begin{axis}
      [domain=6:8,
			width=0.6\textwidth,
			height=0.3\textheight,
			xlabel=$\lambda$,
			ylabel=survival ratio,
            legend pos=north west
	  ]
      \addplot[gray,dashed] table [x=L, y=short, col sep=comma] {strong_surv_data.csv};
      \addlegendentry{$T=2500$}
      \addplot[gray] table [x=L, y=moderate, col sep=comma] {strong_surv_data.csv};
      \addlegendentry{$T=5000$}
      \addplot[black] table [x=L, y=long, col sep=comma] {strong_surv_data.csv};
      \addlegendentry{$\,\,T=10000$}
    \end{axis}
  \end{tikzpicture}
  \caption{
    Simulations of a CPD, $\sigma=\rho=1$, $\delta_a=\delta_d=\frac12$, on $\bbZ/400\bbZ$ with only one initially infected individual at $\{0\}$. The plot shows for $\lambda\in\{6,6.5,6.75,7,7.25,7.5,7.75,8\}$ how many infections of 500 iterations survived until time $T$.
  }
  \label{figure:strongSurv}
\end{figure}

\section{Relation to other models and open problems}

\subsection{Relation to the `classical' multi-type Contact Process}

The classical multi-type (in our set-up: two-type) Contact Process (MTCP) as described e.g.\ in \cite{L85, N92} can be obtained as a scaling limit of our model. Recall that the MTCP does not allow for switches between types of infected individuals, and the healthy state does not carry a type at all. This means that if one only allows for finite switching rates $\rho, \sigma$, the classical multi-type Contact Process lies outside of our modelling framework. However, if one is willing to consider the limits of infinite switching rates (in a suitable sense), the model can be accommodated via the transitions in Figure~\ref{figure:MTCP-CPB} (left).

\subsection{Relation to the CPB of Remenik}
Remenik in \cite{R08} introduced a model which is closely related to the CPD and which we will call the Contact Process with Blocking (CPB), see Figure~\ref{figure:MTCP-CPB} (right). It can be seen as a special case of our model with the convention $\delta_a=1$ and $\delta_d=\infty$. (Choose $\sigma=\delta$ and $\rho=\alpha\delta$ for some $\alpha,\delta>0$ to obtain the nomenclature of \cite{R08}.)
This model is interesting because we can couple any CPS $(\cX_t,\cA_t)$ (with recovery rate $\delta_a$ scaled to one) with a CPB $(\cY_t,\cB_t)$ with identical switching rates and infection rate $\lambda=\lambda_{aa}$ via the graphical construction such that the CPB gets dominated by the CPS, i.e.\ $\cX_t\supseteq\cY_t$. Further, Remenik was able to show complete convergence for his model (cf.\ \cite[Proposition~5.1]{R08}).

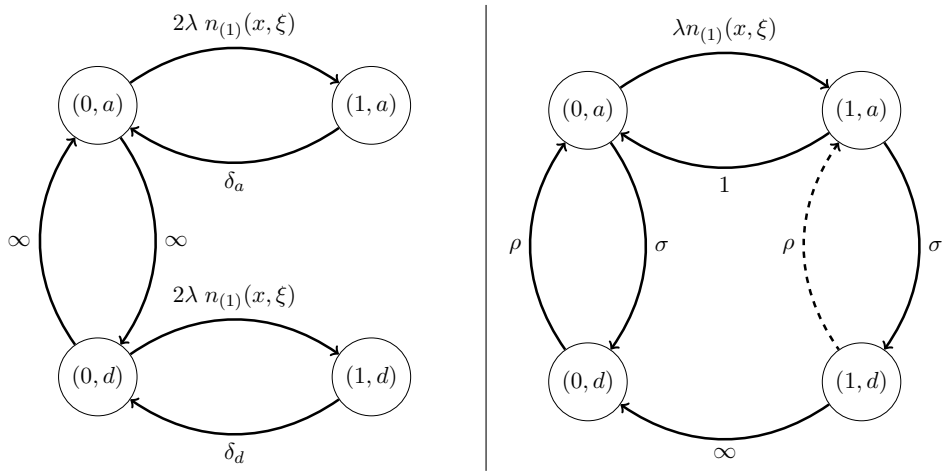
\begin{figure}[h]
	\centering
	\begin{minipage}{0.45\textwidth}
	  \scalebox{0.8}{
		\begin{tikzpicture}
		\node (A) at (0,0) [circle,draw] {$(0,a)$};
		\node (B) at (4.5,0) [circle,draw]{$(1,a)$};
		\node (C) at (0,-4.5) [circle,draw]{$(0,d)$};
		\node (D) at (4.5,-4.5) [circle,draw]{$(1,d)$};
		
		\draw[->, very thick] (A) to[bend left=35]node[above] {$2\lambda~n_{(1)}(x,\xi)$} (B);
		\draw[->, very thick] (C) to[bend left=35]node[above] {$2\lambda~n_{(1)}(x,\xi)$} (D);
		\draw[<-, very thick] (A) to[bend right=35]node[below] {$\delta_a$} (B);
		\draw[<-, very thick] (C) to[bend right=35]node[below] {$\delta_d$} (D);
		\draw[->, very thick] (A) to[bend left=35]node[right] {$\infty$} (C);
		\draw[<-, very thick] (A) to[bend right=35]node[left] {$\infty$} (C);
		\end{tikzpicture}}
	\end{minipage}
	\ \vrule\ 
	\begin{minipage}{.45\textwidth}
	  \scalebox{.8}{
		\begin{tikzpicture}
		\node (A) at (0,0) [circle,draw] {$(0,a)$};
		\node (B) at (4.5,0) [circle,draw]{$(1,a)$};
		\node (C) at (0,-4.5) [circle,draw]{$(0,d)$};
		\node (D) at (4.5,-4.5) [circle,draw]{$(1,d)$};
		
		\draw[->, very thick] (A) to[bend left=35]node[above] {$\lambda n_{(1)}(x,\xi)$} (B);
		\draw[<-, very thick] (A) to[bend right=35]node[below] {$1$} (B);
		\draw[<-, very thick] (C) to[bend right=35]node[below] {$\infty$} (D);
		\draw[->, very thick] (A) to[bend left=35]node[right] {$\sigma$} (C);
		\draw[<-, very thick] (A) to[bend right=35]node[left] {$\rho$} (C);
		\draw[->, very thick] (B) to[bend left=35]node[right] {$\sigma$} (D);
		\draw[<-, very thick, dashed] (B) to[bend right=35]node[left] {$\rho$} (D);
		
		\end{tikzpicture}}
	\end{minipage}
	\caption{Left: The multi-type Contact Process; switching rates $\sigma_0=\rho_0 \to \infty$, $\sigma_1=\rho_1=0$ (MTCP).  Right: State transition graph of the Contact Process with blocking (CPB); $\delta_d\to\infty$, dashed transition will never occur.}
	\label{figure:MTCP-CPB}
\end{figure}

\subsection{The CP with infection dormancy (CPID)}\label{sec:CPID}

Besides the host dormancy perspective, there is also an infection dormancy perspective (``latency'' or ``persistence''). For instance, a pathogen might encapsulate itself 
to have a higher chance to survive under harsh conditions with the trade-off of no reproduction during the encapsulation. To give a model from this point of view we introduce a Contact Process with infection dormancy (CPID), where only infected individuals exhibit switching. We assume that an infected individual with a dormant infection recovers at a smaller rate than an individual with an active infection, i.e.\ $\delta_d<\delta_a$. In return, only individuals which carry an active infection can infect their neighbours.  Again we assume a spontaneous change from active to dormant and vice versa. The transition graph for this model is given below in Figure \ref{figure:CPID} (left).

To make the CPID comparable to CPS, we consider it artificially as a process with states in $F=\{0,1\}\times\{a,d\}$. One way to do this would be letting $\sigma_0=\rho_0\to\infty$ -- similarly to MTCP. However, there is another interesting way without losing the convenient independence between switching rates and infection, which is given by introducing a \emph{diagonal} arrow from $(0,d)$ to $(1,a)$ (cf.\ Figure~\ref{figure:CPID}, right).

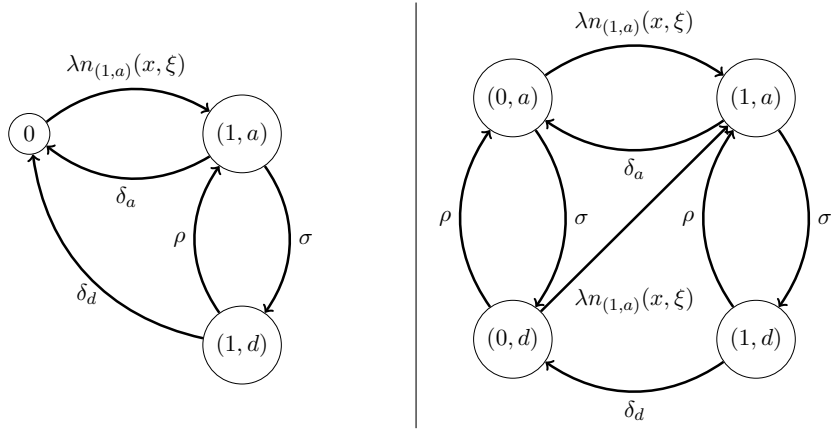
\begin{figure}[h]
	\centering
	\begin{minipage}{0.45\textwidth}
	 \centering
	  \scalebox{0.8}{
		\begin{tikzpicture}
		\node (A) at (0,0) [circle,draw] {$0$};
		\node (B) at (3.5,0) [circle,draw]{$(1,a)$};
		\node (D) at (3.5,-3.5) [circle,draw]{$(1,d)$};
		\draw[->, very thick] (A) to[bend left=35]node[above] {$\lambda n_{(1,a)}(x,\xi)$} (B);
		\draw[<-, very thick] (A) to[bend right=35]node[below] {$\delta_a$} (B);
		\draw[<-, very thick] (A) to[bend right=35]node[below] {$\delta_d$} (D);
		\draw[->, very thick] (B) to[bend left=35]node[right] {$\sigma$} (D);
		\draw[<-, very thick] (B) to[bend right=35]node[left] {$\rho$} (D);
		\end{tikzpicture}}
	\end{minipage}
	\ \vrule\ 
	\begin{minipage}{.45\textwidth}
	  \scalebox{.8}{
		\begin{tikzpicture}
		\node (A) at (0,0) [circle,draw] {$(0,a)$};
		\node (B) at (4,0) [circle,draw]{$(1,a)$};
		\node (C) at (0,-4) [circle,draw]{$(0,d)$};
		\node (D) at (4,-4) [circle,draw]{$(1,d)$};
		\draw[->, very thick] (A) to[bend left=35]node[above] {$\lambda n_{(1,a)}(x,\xi)$} (B);
		\draw[->, very thick] (C) to node[below,yshift=-1cm] {$\lambda n_{(1,a)}(x,\xi)$} (B);
		\draw[<-, very thick] (A) to[bend right=35]node[below] {$\delta_a$} (B);
		\draw[<-, very thick] (C) to[bend right=35]node[below] {$\delta_d$} (D);
		\draw[->, very thick] (A) to[bend left=35]node[right] {$\sigma$} (C);
		\draw[<-, very thick] (A) to[bend right=35]node[left] {$\rho$} (C);
		\draw[->, very thick] (B) to[bend left=35]node[right] {$\sigma$} (D);
		\draw[<-, very thick] (B) to[bend right=35]node[left] {$\rho$} (D);
		\end{tikzpicture}}
	\end{minipage}
	\caption{Left: State transition graph of the CPID.  Right: Equivalent characterization of the CPID on $F^S$.}
	\label{figure:CPID}
\end{figure}
Via such diagonal arrows, infection events directly influence activity states, and this gives rise to additional stochastic dependencies, since activity states do not evolve independently of the infection anymore.
Furthermore, the ability of dormant infections to block infection spread leads to the failure of the coupling arguments that we need to prove either monotonicity, additivity or attractivity, which makes the application of the methods in Sections~\ref{sec:statdis-phasetrans} and \ref{sec:comparison} infeasible.

What one can do, however, is to compare such a CPID $(\cX_t,\cA_t)$ to a CPD $(\cY_t,\cB_t)$ with identical switching and recovery rates.
Then, by comparing the graphical constructions of both processes with identical initial configurations, one sees that $\cA_t\supseteq\cB_t$ for all $t$, which then implies $\cX_t\supseteq\cY_t$.
In turn, the CPID is also dominated by a CP with rates $\lambda$ and $\bar{\delta}$ by the same arguments as in Theorem \ref{thm:upper-bound-CP}.

These couplings at least provide a basic understanding of the behaviour of the CPID. In particular, we can conclude the existence of some parameter choices such that the CPID survives strongly and others such that the process does not survive.
Nevertheless, a more vigorous study of the CPID is desirable.

Lastly, one could also think of taking the diagonal arrow from $(0,a)$ to $(1,d)$ into account, which could correspond to infections exhibiting an incubation period.

\subsection{Open problems and future research}
Obviously, one may consider switching models with more than just two types, and perhaps even infinitely many, opening a wide modelling range.  For the two type case, natural next steps could be the derivation of a complete convergence result, investigation of \emph{strong} survival as well as investigating whether a critical CPS dies out.
Indeed, switching has very recently been introduced into systems of random walks, exclusion and inclusion processes,  in a form that affects jump rates. In this context, after suitable rescaling, it can be shown that `upward flows' invalidating Fick's transport law may emerge, which are not possible in single-type scenarios (\cite{FGHNR22}).\\

\medskip

\bibliographystyle{abbrv}
\bibliography{literature}

\end{document}